\newtheorem{theorem}{Theorem}[section]
\newtheorem{lemma}[theorem]{Lemma}
\newtheorem{remark}[theorem]{Remark}
\newtheorem{corollary}[theorem]{Corollary}
\newcommand{\sect}[1]{\section{#1} \setcounter{equation}{0} }
\newcounter{ca}
\newcommand{\norm}[1]{\left\|#1\right\|}
\newcommand{\ds}{\displaystyle}
 \newcommand{\ec}{\end{comment}}
\newcommand{\bc}{ \begin{comment}
 }
\newcommand{\andd}{\quad\mbox{\rm and}\quad}
\newcommand\ep{\epsilon}
\newcommand\fep{f_\ep}
\def\be  {\begin{equation}}
\def\ee  {\end{equation}}
\def\ba  {\begin{eqnarray}}
\def\ea  {\end{eqnarray}}
\def\baa {\begin{eqnarray*}}
\def\eaa {\end{eqnarray*}}
\newenvironment{comment}[2]
{\bgroup\vspace{7pt}
\begin{tabular}{|p{5in}|}
\hline \qquad \bf \footnotesize Comment -- to be deleted in the final version \\
\hline
\quad\sl\footnotesize #1#2} {\\ \hline \end{tabular}
\vspace{7pt}\indent\egroup}
\def\updots{\mathinner{\mkern
1mu\raise 1pt \hbox{.}\mkern 2mu \mkern 2mu \raise
4pt\hbox{.}\mkern 1mu \raise 7pt\vbox {\kern 7 pt\hbox{.}}} }
\newcommand{\R}{\mathbb R}
\newcommand{\N}{\mathbb N}
\renewcommand{\a}{\alpha}
\renewcommand{\b}{\beta}
\newcommand{\ineq}[1]{{\rm(\ref{#1})}}
\newcommand{\ie}{{\em i.e., }}
\newcommand{\eg}{{\em e.g., }}
\newcommand{\bpic}{
\begin{center}
}
\newcommand{\epic}{
\endpspicture
\end{center}
}
\newcommand{\st}{\;\; \big| \;\;}
\newcommand{\Poly}{\Pi}
 \newcommand{\AC}{\mathrm{AC}}
  \newcommand{\loc}{\mathrm{loc}}
\newcommand{\thm}[1]{Theorem~$\ref{#1}$}
\newcommand{\lem}[1]{Lemma~$\ref{#1}$}
\newcommand{\cor}[1]{Corollary~$\ref{#1}$}
\newcommand{\rem}[1]{Remark~$\ref{#1}$}
\newcommand{\eps}{\varepsilon}
\newcommand{\geps}{g_\eps}
\newcommand{\gP}{{\mathcal P}}
\newcommand{\Q}{{\mathcal Q}}
\newcommand{\dal}{D^{\langle \alpha \rangle}}
\newcommand{\lupas}{Lupa\c{s}}
\newcommand{\xxx}{\xi}
\title{{\sc Yet another look at positive linear operators, $q$-monotonicity and applications}
\thanks{{\it AMS classification:} 41A10, 41A17, 41A25. {\it Keywords
and phrases:} Positive linear operators, degree
of approximation, Jackson-type estimates, modulus of smoothness, Gavrea's operator, Bernstein-Durrmeyer-Lupa\c{s} polynomials with ultraspherical weights}}
\author{K.  Kopotun\thanks{Department of Mathematics, University of
Manitoba, Winnipeg, Manitoba, R3T 2N2, Canada ({\tt
Kirill.Kopotun@umanitoba.ca}). Supported by NSERC of Canada Discovery Grant RGPIN 04215-15.} ,
D. Leviatan\thanks{Raymond and Beverly Sackler School of Mathematical
Sciences, Tel Aviv University, Tel Aviv 69978, Israel ({\tt
leviatan@post.tau.ac.il}).} , A. Prymak\thanks{Department of Mathematics, University of
Manitoba, Winnipeg, Manitoba, R3T 2N2, Canada ({\tt
prymak@gmail.com}). Supported by NSERC of Canada Discovery Grant RGPIN 04863-15.}\ \ and I. A. Shevchuk\thanks
{Faculty of Mechanics and Mathematics,  Taras
Shevchenko National University of Kyiv, 01033 Kyiv, Ukraine ({\tt
shevchukh@ukr.net}).} }
\begin{document}

%
%
%
%
%
%
%
%



%
%


\maketitle

\abstract{

For each $q\in\N_0$, we construct positive linear polynomial approximation operators $M_n$ that simultaneously preserve $k$-monotonicity for all $0\leq k\leq q$  and yield the estimate
\[
|f(x)-M_n(f, x)|  \leq  c \omega_2^{\varphi^\lambda} \left(f,  n^{-1} \varphi^{1-\lambda/2}(x) \left(\varphi(x) + 1/n \right)^{-\lambda/2}  \right) ,
\]
for $x\in [0,1]$ and  $\lambda\in [0, 2)$,
where $\varphi(x) := \sqrt{x(1-x)}$ and  $\omega_2^{\psi}$ is the second Ditzian-Totik modulus of smoothness corresponding to the ``step-weight function'' $\psi$.
In particular, this implies that the rate of best uniform $q$-monotone polynomial approximation can be estimated in terms of
  $\omega_2^{\varphi} \left(f,  1/n  \right)$.

}

\sect{Introduction and main result}

Recall that
$
{\Delta}_\delta^{k}(f, x):= \sum_{i=0}^{k} { k \choose i }
(-1)^{k-i} f(x-k\delta/2+i\delta),
$
denotes the $k$th symmetric difference of a function $f$ with a
step $\delta$ (as is customary, we also define ${\Delta}_\delta^{k}(f, x):=0$ if $x\pm k\delta/2 \not \in [0,1]$).
We say that a function $f\in C[0,1]$ is   $q$-monotone  if ${\Delta}_\delta^{q}(f, x) \geq 0$  for all $\delta>0$,
and denote the set of all $q$-monotone (continuous) functions by $\Delta^{(q)}$.
In particular,  $\Delta^{(0)}$, $\Delta^{(1)}$ and $\Delta^{(2)}$  are, respectively, the classes of all nonnegative, nondecreasing and convex functions from $C[0,1]$.
We also remark that, for $q\geq 3$, $f\in C[0,1]$ is $q$-monotone if and only if $f\in C^{q-2}(0,1)$ and $f^{(q-2)}$ is convex in $(0,1)$.

Let $\Poly_n$ be the space of all algebraic polynomials of degree $\leq n$, $\norm{\cdot}{} := \norm{\cdot}_{L_\infty[0,1]}$,   and denote
 by
\be \label{qmon}
E^{(q)}_n(f):=\inf_{p_n\in\Delta^{(q)}\cap\Poly_n}\|f-p_n\|
\ee
the degree of best $q$-monotone polynomial approximation of $f\in\Delta^{(q)}$ in the uniform norm, and by
\[
\omega_k (f,t) :=\sup_{0<h\le t}\norm{\Delta^k_{h}(f,\cdot)}  \andd
\omega_k^\psi(f,t) :=\sup_{0<h\le t}\norm{\Delta^k_{h\psi(\cdot)}(f,\cdot)}
\]
the $k$th classical and $k$th Ditzian-Totik  moduli of smoothness, respectively.

Both uniform and pointwise Jackson type estimates for $q$-monotone polynomial approximation are rather well investigated for  $q\leq 3$ though there are still several open problems remaining even in these ``simple'' cases (see our survey \cite{klps} for the history and detailed discussions), and we are mostly interested in   $q\geq 4$ in the current paper. In particular, our main motivation for the present work was the Jackson type estimate
\be\label{jackson}
 E^{(q)}_n(f)\le c\omega_{2}^\varphi(f,1/n), \quad n\in\N,
\ee
where $\varphi(x):= \sqrt{x(1-x)}$ and $\N$ denotes the set of all natural numbers.
It has been known for some time   that estimate \ineq{jackson}  is true with $\omega_2$ instead of $\omega_2^\varphi$ and that, for $q\geq 4$,
it is no longer valid if $\omega_{2}^\varphi$ is replaced by $\omega_{3}^\varphi$  or even by $\omega_{3}$  (see \cite{klps} for details). While \ineq{jackson} has not been explicitly proved anywhere (as far as we know) and appeared as an open problem in the literature (see, \eg \cite{d}*{(15.12)}),
in our survey \cite{klps}*{p. 52}, we wrote that, for $q\ge4$, \ineq{jackson} ``can be derived from results in the article by Gavrea,
Gonska, P{\u a}lt{\u a}nea and Tachev \cite{ggpt},
combined with the $q$-monotonicity preservation properties of the
Gavrea operators (see Gavrea \cite{g}), appearing in the
paper of Cottin, Gavrea, Gonska, Kacs\'o and Zhou \cite{cggkz}.''

However, it turns out that this statement was not justified (we thank Jorge Bustamante from Universidad Aut\'{o}noma de Puebla, Mexico for bringing this to our attention), and that the validity of \ineq{jackson} cannot be immediately concluded from the results in these articles (this was also confirmed by the corresponding author of \cite{cggkz} who was not aware of any other papers that would yield this estimate).  The confusion was that, in these papers, the same notation was used for operators preserving $q$-monotonicity, $q\ge3$, and for  operators   yielding estimates in terms of $\omega_{2}^\varphi(f,1/n)$.
 However, these operators depended on different generating polynomials and so, in fact, were different operators  not satisfying both conditions at the same time.

Hence, the main purpose of this manuscript is to justify/modify our statement in \cite{klps}  and show how \ineq{jackson} ``can be derived from \cites{g,cggkz, felten}'' (note that \cite{ggpt} in our original statement is replaced by an earlier paper \cite{felten}) by
constructing positive linear polynomial approximation operators that simultaneously preserve $k$-monotonicity for all $k\leq q$  and yield \ineq{jackson}. Additionally, we make this paper self-contained and provide all proofs (except for some straightforward statements that can be verified directly and some classical properties of ultraspherical polynomials). Furthermore, we prove a more general statement than \ineq{jackson} by bridging pointwise and uniform estimates (see \cite{d}*{Section 14} for the history of this type of estimates) and, in fact, making them a bit stronger than what usually appears in the literature. For example, {\em pointwise} inequalities in terms of $\omega_2^\varphi$ are obtained as a by-product of our estimates.

Let $\N_0 := \N\cup\{0\}$. Our main result is the following theorem which is proved in Section~\ref{proofmain}.

\begin{theorem} \label{mainth}
 Let $q\in\N_0$. Then, for each $n\in\N$, there exists a positive linear  operator $M_n:C[0,1]\mapsto\Poly_n$ preserving $k$-monotonicity for every $0\leq k \leq q$
 (\ie $f\in\Delta^{(k)}$ implies $M_n(f,\cdot) \in \Delta^{(k)}$)
 and such that, for any $0\leq \lambda<2$, $f\in  C[0,1]$, $x\in [0,1]$ and $0<h\leq c_0$, one has
\be \label{maines}
 |f(x)-M_n(f, x)| \leq c \left(  1 +  {\varphi^{2-\lambda}(x) \over h^2 n^2 \left( \varphi(x) + 1/n \right)^\lambda }  \right) \omega_2^{\varphi^\lambda} (f, h),
 \ee
 where
 $c_0$ is some absolute constant,  and the constant $c$ depends only on $q$ and  on $\lambda$ as $\lambda \to 2-$.
\end{theorem}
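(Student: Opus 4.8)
The plan is to exhibit a single positive operator enjoying two properties that the literature had realized only through different operators --- preservation of $k$-monotonicity for every $k\le q$ and the near-optimal weighted Jackson rate --- and then to feed it into a $K$-functional argument. For the shape-preserving part I would use the Gavrea construction \cite{g,cggkz}, whose natural vehicle is a Bernstein--Durrmeyer--Lupa\c{s} operator carrying an ultraspherical weight $w_\rho(t)=(t(1-t))^\rho$,
\[
M_n(f,x)=f(0)\,p_{n,0}(x)+f(1)\,p_{n,n}(x)+\sum_{k=1}^{n-1}p_{n,k}(x)\,\frac{\int_0^1 p_{n,k}(t)\,w_\rho(t)\,f(t)\,dt}{\int_0^1 p_{n,k}(t)\,w_\rho(t)\,dt},
\]
with $p_{n,k}(x)=\binom{n}{k}x^k(1-x)^{n-k}$, interpolating $f$ at the endpoints; the interpolation forces $M_n$ to reproduce $1$ and $x$, and positivity together with $M_n 1=1$ gives $\|M_n\|=1$. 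The generating weight $w_\rho$ (parameter $\rho=\rho(q)$, and if necessary a richer generating sequence) is the free data to be tuned so that the \emph{same} operator also realizes the optimal variance discussed below; isolating that choice is the heart of the matter.

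Next I would verify that $M_n$ preserves $\Delta^{(k)}$ for all $k\le q$ simultaneously. Writing $M_n f=\sum_k c_k(f)\,p_{n,k}$, the mechanism is that $k$-monotonicity of a polynomial expressed in the Bernstein basis is governed by the signs of the $k$th finite differences of its coefficients, so it suffices to show that $f\in\Delta^{(k)}$ implies $\Delta^k c_j(f)\ge 0$ for all admissible $j$. This is exactly the total-positivity / variation-diminishing feature of the Gavrea operators imported from \cite{g,cggkz}, and the role of the ultraspherical weight is to let one argument cover the entire range $0\le k\le q$, so that a single generating object serves all orders at once.

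The analytic core is a $K$-functional estimate in the manner of Felten \cite{felten}. For smooth $g$ I would expand $g(t)-g(x)-g'(x)(t-x)=\int_x^t(t-u)g''(u)\,du$, bound $|g''(u)|\le\varphi^{-2\lambda}(u)\,\|\varphi^{2\lambda}g''\|$, and apply $M_n$, using $\|M_n\|=1$ and reproduction of $1$ and $x$ to reduce the whole error to the weighted moment
\[
M_n\!\left(\int_x^{t}(t-u)\,\varphi^{-2\lambda}(u)\,du,\ x\right)\le c\,\frac{\varphi^{2-\lambda}(x)}{n^{2}\,(\varphi(x)+1/n)^{\lambda}}.
\]
Granting this bound, one obtains $|f(x)-M_n(f,x)|\le c(\|f-g\|+\varphi^{2-\lambda}(x)\,n^{-2}(\varphi(x)+1/n)^{-\lambda}\,\|\varphi^{2\lambda}g''\|)$; taking the infimum over $g$ and invoking the equivalence of the attached weighted $K$-functional with $\omega_2^{\varphi^\lambda}(f,h)$ at scale $h$ yields \eqref{maines}, and the specialization $\lambda=1$, $h=1/n$ collapses the bracket to an absolute constant and recovers \eqref{jackson}.

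The main obstacle is twofold. First, the displayed moment demands an \emph{optimal} variance --- in the interior of order $\varphi^2(x)/n^2$, a factor $n$ smaller than for Bernstein- or ordinary Durrmeyer-type operators --- and this must coexist with the shape preservation of the second step; reconciling the two in one positive operator is precisely the tension behind the confusion recorded in the introduction, and I expect the real work to lie in pinning down the generating data achieving both at once. Second, even with the right operator in hand, the moment estimate itself is delicate: $\varphi^{-2\lambda}$ is singular at the endpoints for $\lambda>0$, so one must split $[0,1]$ into a central range $\varphi(x)\gtrsim 1/n$ and boundary layers $x=O(1/n)$, $1-x=O(1/n)$, where the endpoint-interpolation terms and the regularizing factor $(\varphi(x)+1/n)^{-\lambda}$ absorb the singularity, and the constant $c$ must be kept under control as $\lambda\to2-$, where the weight degenerates most severely --- for which I would expect to need the explicit ultraspherical moment identities rather than any soft argument.
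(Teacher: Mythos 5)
Your high-level architecture --- a positive linear operator reproducing linear functions, fed into a Felten-type pointwise $K$-functional estimate and the equivalence $K_{2,\varphi^\lambda}(f,h^2)\sim\omega_2^{\varphi^\lambda}(f,h)$ --- is indeed the paper's architecture (\lem{mainlemma} and \cor{maincor}, which also handle the endpoint singularity for $\lambda\in[0,2)$ via an auxiliary point $\xxx$, essentially the boundary-layer splitting you sketch). But there is a genuine gap at exactly the point you yourself flag as ``the heart of the matter'': producing one operator with both $q$-monotonicity preservation and second moment $M_n(e_2,x)-x^2=O(\varphi^2(x)/n^2)$. The single weighted operator you write down cannot be tuned to do this. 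First, endpoint interpolation does \emph{not} force reproduction of $e_1$: computing the coefficients, your $M_n$ reproduces $e_1$ only if $(k+\rho+1)/(n+2\rho+2)=k/n$ for all $1\le k\le n-1$, which for $n\ge3$ forces $\rho=-1$, i.e.\ precisely the genuine Bernstein--Durrmeyer operator $U_n$. And then, by \ineq{opun}, $U_n(e_2,x)-x^2=2\varphi^2(x)/(n+1)$: variance of order $1/n$, the very barrier you acknowledge; no fixed ultraspherical weight changes this order. So your proposal proves the conditional half (given the right operator, the estimate follows) and leaves the existence of the operator --- which is the actual content of the theorem --- unestablished.

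What the paper does instead is structurally different from anything in your plan: the operator is not a single Durrmeyer-type operator but Gavrea's \emph{combination over all degrees}, $H_{n+2}(\gP_n;f,x)=\sum_{k=0}^n\frac{a_k}{k+1}U_{k+2}(f,x)$, see \ineq{gavreaop}, whose moment identity (\lem{mainspa}) reads $H_{n+2}(\gP_n;e_2,x)-x^2=\varphi^2(x)\left(1-\int_0^1t^2\gP_n(t)\,dt\right)$. Thus the variance is governed not by a weight exponent but by how much mass of the generating polynomial $\gP_n$ sits near $t=1$. The decisive step is then \lem{lem210}: a generating polynomial $P_n$ with $P_n^{(\nu)}\ge0$ on $[0,1]$ for all $\nu\le r$ (which by \lem{mainspa} gives $k$-monotonicity preservation for $k\le r+1$), $\int_0^1P_n=1$, and $1-\int_0^1t^2P_n(t)\,dt\le c/n^2$, constructed as an $r$-fold antiderivative of a high power of the normalized Chebyshev factor $\tau_m$; the paper also shows the $n^{-2}$ rate there is best possible. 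This lemma, together with the Lupa\c{s} kernel positivity (\cor{keycor3}) which is what actually proves the monotonicity preservation --- the paper differentiates $H_{n+2}$ and exhibits each derivative as a positive combination of Durrmeyer--Lupa\c{s} operators applied to $f^{(\nu)}$, rather than running a finite-difference/total-positivity argument on Bernstein coefficients as you suggest --- is the missing content; without the combination structure and this generating polynomial, the proof cannot be completed along the route you describe.
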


\begin{remark}
The operators $M_n$ are particular instances (for the generating polynomials constructed in \lem{lem210})
of, what we call, Gavrea's operators $H_{n}$ whose construction  is based on  Ioan Gavrea's  clever combination of genuine Bernstein-Durrmeyer polynomials  with coefficients of appropriate generating polynomials (see \ineq{gavreaop}). This construction heavily relies on a very powerful but little known and hardly accessible article by Alexandru Lupa\c{s}  \cite{lup}, extending the Bernstein-Durrmeyer operators by introducing ultraspherical weights (see Section \ref{append} for details).
\end{remark}

We wish to emphasize that the range for $\lambda$ in the statement of \thm{mainth} is not a misprint and that, indeed, we work with $\lambda\in [0, 2)$ and not just $\lambda\in [0,1]$ which is what is usually done. This does not seem to have been considered in the literature as far as we know, and we discuss why it is sometimes useful to work with these $\lambda$'s and corresponding moduli $\omega_2^{\varphi^\lambda}$   by considering  an analog of \thm{mainth} for the classical Bernstein polynomials (see \cor{clBern})
 and comparing various estimates for a particular    function ($\fep (x)= x^\ep$) in
Section~\ref{whyuseful}.

We also note that \ineq{maines} is not valid if $\lambda=2$. In fact, it is not difficult to see that the estimate
\[
E_n(f) := \inf_{p_n\in\Poly_n}\|f-p_n\| \leq c \omega_2^{\varphi^2}(f, 1)
\]
is not valid with $c$ independent of $f$. Indeed, if $g_\ep := \ln(x+\ep)$, then
$\omega_2^{\varphi^2} (g_\ep,1) \leq c \norm{\varphi^4 g_\ep''} \leq c$ where $c$ is an absolute constant.
At the same time,
for any $A\in\R$ and $n\in\N$ there exists $0<\ep<1$ such that
$E_n(g_\ep)>A$. This follows from the observations that $|p_n(0)| \leq c(n) \norm{p_n}_{C[1/2,1]}$, for any $p_n \in \Poly_n$, and
 $\norm{g_\ep}_{C[1/2,1]} \leq \ln 2$. Hence, if $q_n\in\Poly_n$ is such that $\norm{q_n - g_\ep} \leq A$, then
 \[
 |\ln \ep| = |g_\ep(0)| \leq |g_\ep(0)-q_n(0)| + |q_n(0)| \leq A + c(n) (A+\ln 2),
 \]
and one obtains a contradiction by taking $\ep>0$ sufficiently small.

For $0\leq \lambda < 2$, choosing  $h:= \min\{c_0, 1\} \, n^{-1} \varphi^{1-\lambda/2}(x) \left(\varphi(x) + 1/n \right)^{-\lambda/2} $ (which implies that $h\leq c_0$)
we immediately have the following consequence of \thm{mainth}.

\begin{corollary} \label{cormainth} Let $q\in\N_0$. Then, for each $n\in\N$, there exists a positive linear  operator $M_n:C[0,1]\mapsto\Poly_n$ preserving $k$-monotonicity for every $0\leq k \leq q$, and such that,
for any $0\leq \lambda <2$, $f\in  C[0,1]$ and $x\in [0,1]$, one has
 \begin{eqnarray} \label{in:coroll}
 |f(x)-M_n(f, x)| &\leq&  c \omega_2^{\varphi^\lambda} \left(f,  n^{-1} \varphi^{1-\lambda/2}(x) \left(\varphi(x) + 1/n \right)^{-\lambda/2}  \right) ,
\end{eqnarray}
where $c$ is a constant that depends only on $q$ and  on $\lambda$ as $\lambda \to 2-$.
In particular, for $\lambda = 0$ and $\lambda=1$ we have, respectively,
 \be \label{in:cg}
 |f(x)-M_n(f, x)| \leq c  \omega_2  \left(f,  \frac{\sqrt{x(1-x)}}{n} \right)
 \ee
and
\be \label{in:dtmod}
|f(x)-M_n(f, x)|
\leq c
 \omega_2^{\varphi} \left(f, n^{-1} \sqrt{ {\varphi(x) \over \varphi(x)+1/n } } \, \right)
\leq c   \omega_2^{\varphi} \left(f,  n^{-1}  \right).
\ee
\end{corollary}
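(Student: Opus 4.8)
The plan is to read the corollary off \thm{mainth} by substituting the prescribed value of $h$ and observing that the parenthetical factor in \eqref{maines} then collapses to an absolute constant; no new analytic input is required. Fix $q$ and $\lambda\in[0,2)$, let $M_n$ be the operator supplied by \thm{mainth}, and for interior $x\in(0,1)$ write
\[
t:=n^{-1}\varphi^{1-\lambda/2}(x)\bigl(\varphi(x)+1/n\bigr)^{-\lambda/2}
\andd
h:=\min\{c_0,1\}\,t .
\]
Since $\varphi(x)>0$ on $(0,1)$ we have $h>0$, so the first thing to verify is the admissibility $h\le c_0$. Using $\varphi(x)\le 1/2$, $\varphi(x)+1/n\ge 1/n$ and $1-\lambda/2\ge 0$, one gets $t\le (1/2)^{1-\lambda/2}n^{\lambda/2-1}\le 1$ (here $\lambda<2$ and $n\ge1$ are used), whence $h\le\min\{c_0,1\}\le c_0$ and, simultaneously, $h\le t$.

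The heart of the matter is the elementary identity
\[
h^2 n^2\bigl(\varphi(x)+1/n\bigr)^{\lambda}=\min\{c_0,1\}^2\,\varphi^{2-\lambda}(x),
\]
obtained by squaring $h$ and cancelling the powers of $\varphi(x)+1/n$. Hence, for $x\in(0,1)$ the factor in \eqref{maines} equals
\[
1+\frac{\varphi^{2-\lambda}(x)}{h^2 n^2(\varphi(x)+1/n)^{\lambda}}=1+\min\{c_0,1\}^{-2},
\]
an absolute constant. Feeding this into \thm{mainth} gives $|f(x)-M_n(f,x)|\le c\,\omega_2^{\varphi^\lambda}(f,h)$, and since $h\le t$ and $\omega_2^{\varphi^\lambda}(f,\cdot)$ is nondecreasing, the right-hand side is at most $c\,\omega_2^{\varphi^\lambda}(f,t)$. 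This is precisely \eqref{in:coroll} for interior $x$, with $c$ inheriting from \thm{mainth} the dependence on $q$ and on $\lambda\to2-$.

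The only point needing genuine care is the behaviour at $x\in\{0,1\}$, where $\varphi(x)=0$ forces $t=0$ (as $1-\lambda/2>0$): there \thm{mainth} is not directly applicable since it requires $h>0$, and the right-hand side of \eqref{in:coroll} degenerates to $c\,\omega_2^{\varphi^\lambda}(f,0)=0$, so the assertion is the endpoint interpolation $M_n(f,x)=f(x)$. I would establish this by a limiting argument in the interior estimate just proved: letting $x\to0^+$ (respectively $x\to1^-$), the left side tends to $|f(x)-M_n(f,x)|$ by continuity of $f$ and of the polynomial $M_n(f,\cdot)$, while $t\to0$ and $\omega_2^{\varphi^\lambda}(f,s)\to0$ as $s\to0^+$ for $f\in C[0,1]$ (indeed $\omega_2^{\varphi^\lambda}(f,s)\le\omega_2(f,(1/2)^\lambda s)$ because $\varphi^\lambda\le(1/2)^\lambda$), forcing the left side to vanish. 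Finally, the two displayed special cases are mere specializations: for $\lambda=0$ one has $\varphi^0\equiv1$, so $\omega_2^{\varphi^0}=\omega_2$ and $t=n^{-1}\sqrt{x(1-x)}$, which is \eqref{in:cg}; for $\lambda=1$ one has $t=n^{-1}\sqrt{\varphi(x)/(\varphi(x)+1/n)}$, giving the first inequality in \eqref{in:dtmod}, and since $t\le n^{-1}$ the monotonicity of $\omega_2^\varphi$ yields the second.
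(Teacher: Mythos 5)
Your proof is correct and is essentially the paper's own argument: the paper likewise takes the operator $M_n$ from \thm{mainth}, sets $h:=\min\{c_0,1\}\,n^{-1}\varphi^{1-\lambda/2}(x)\left(\varphi(x)+1/n\right)^{-\lambda/2}$ (noting $h\le c_0$), and reads \ineq{in:coroll} off \ineq{maines}, the parenthetical factor collapsing to an absolute constant exactly as you compute. Your careful treatment of the endpoints $x\in\{0,1\}$, where the paper's one-line derivation is silent (the operator in fact interpolates there, being positive, linear and preserving linear functions), is a minor but legitimate addition rather than a different method.
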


\begin{remark}
Estimate \ineq{in:dtmod} verifies \ineq{jackson}.
Inequality \ineq{in:cg} was proved by Cao and Gonska in $1994$  (\cite{cg}*{Theorem 4.5}). However, the operator yielding it in  \cite{cg}  was not positive.
\end{remark}

\begin{remark} Estimate \ineq{in:coroll} can be rewritten as
\[
|f(x)-M_n(f, x)| \leq c \omega_2^{\varphi^\lambda} \left(f,   \delta_{n, \lambda}(x)  \right),
\]
where, for $n\in\N$ and $0\leq \lambda<2$,
\[
\delta_{n, \lambda}(x) :=
\begin{cases}
\left[ n^{-1}\varphi(x)  \right]^{1-\lambda/2 }, & \mbox{\rm if }\; x\in\left[0, n^{-2}\right]\cup \left[1-n^{-2},1\right] , \\
n^{-1} \varphi^{1-\lambda}(x) ,  & \mbox{\rm if }\; n^{-2}< x <1-n^{-2},
\end{cases}
\]
and implies that, for $f\in C[0,1]$ with $f'\in\AC_\loc(0,1)$ and $\norm{\varphi^{2\lambda} f''} < \infty$,
\[
|f(x)-M_n(f, x)| \leq c \left[\delta_{n, \lambda}(x)\right]^2  \norm{\varphi^{2\lambda} f''}, \quad x\in [0,1].
\]
\end{remark}

Throughout this paper, we use the notation  $e_i(x):= x^i$, $i\in\N_0$, and
 $(\beta)_k := \beta(\beta+1)\dots (\beta+k-1)$ for $k\geq 1$, and $(\beta)_0 := 1$
(\ie  $(\beta)_k$ is the Pochhammer function).

\sect{Approximation by positive linear operators preserving linear functions}

Recall that an operator  $L: C[0,1] \mapsto C[0,1]$ is positive if $L(f, x) \geq 0$ for all $x\in [0,1]$ provided $f(x)\geq 0$, $x\in [0,1]$.

 Let
\[
 \Omega  := \left\{ \psi \in  C[0,1] \st \psi(x) > 0, \; 0<x<1, \;\;  \mbox{\rm and $\psi^2$ is concave on $[0,1]$} \right\}
\]
and
 \[
K_{2,\psi}(f,h^2) := \inf_{g'\in\AC_\loc(0,1)}(\|f-g\|+h^2\|\psi^2  g''\|) .
\]

The following lemma is a corollary of a more general theorem  \cite{felten}*{Theorem 1} that was proved for positive linear operators preserving constants.

\begin{lemma}[Felten~\cite{felten}] \label{fel}
Suppose that $\psi \in \Omega$
 and
 $L:  C[0,1] \mapsto  C[0,1]$ is a positive linear operator preserving linear functions (\ie $L(e_i)=e_i$, $i=0,1$). Then, for any $f\in  C[0,1]$ and $x\in (0,1)$, one has
\[
 |f(x)-L(f, x)| \leq 4 K_{2, \psi} \left( f, \frac{L(e_2, x)-x^2}{\psi^2(x)} \right) .
\]
\end{lemma}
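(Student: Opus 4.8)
The plan is to derive Felten's Lemma for operators preserving linear functions as a direct corollary of the cited result for operators preserving constants. The key observation is that if $L$ preserves linear functions, then in particular it preserves constants (since $L(e_0)=e_0$), so the general theorem \cite{felten}*{Theorem 1} applies immediately. The quantity appearing in that theorem, when specialized, should reduce to $K_{2,\psi}$ evaluated at the argument $(L(e_2,x)-x^2)/\psi^2(x)$. The main task is therefore to identify the correct form of the general estimate and verify that the extra assumption $L(e_1)=e_1$ makes the error term collapse to precisely this argument.

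First I would recall the statement of \cite{felten}*{Theorem 1}: for a positive linear operator $L$ preserving constants and for $\psi \in \Omega$, the general pointwise bound controls $|f(x)-L(f,x)|$ by $4K_{2,\psi}(f, \cdot)$ where the argument involves the second moment $L((\cdot-x)^2, x)$ of $L$ about the point $x$, normalized by $\psi^2(x)$. The natural quantity measuring the local spread of a constant-preserving operator is $L((e_1-x)^2, x)$, i.e.\ $L$ applied to $(t-x)^2$ as a function of $t$, evaluated at $x$. So the general theorem reads
\[
|f(x)-L(f,x)| \leq 4 K_{2,\psi}\left(f, \frac{L((e_1-x)^2,x)}{\psi^2(x)}\right).
\]

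Next I would expand this second moment using linearity of $L$ and the preservation properties. Writing $(t-x)^2 = e_2 - 2x\,e_1 + x^2 e_0$, positivity and linearity give
\[
L((e_1-x)^2, x) = L(e_2,x) - 2x\, L(e_1,x) + x^2 L(e_0,x).
\]
Here is where the hypothesis $L(e_1)=e_1$ (beyond mere constant preservation) enters: using $L(e_0,x)=1$ and $L(e_1,x)=x$, the right-hand side simplifies to $L(e_2,x) - 2x\cdot x + x^2 = L(e_2,x) - x^2$. Substituting this into the general bound yields exactly
\[
|f(x)-L(f,x)| \leq 4 K_{2,\psi}\left(f, \frac{L(e_2,x)-x^2}{\psi^2(x)}\right),
\]
which is the claimed estimate.

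I do not anticipate a genuine obstacle here, since the result is labeled as a corollary and the reduction is essentially algebraic. The only point requiring mild care is confirming that the argument $(L(e_2,x)-x^2)/\psi^2(x)$ is nonnegative (so that $K_{2,\psi}$ is evaluated at a legitimate value): this follows because $L((e_1-x)^2,x)\geq 0$ by positivity of $L$ applied to the nonnegative function $(t-x)^2$, and $\psi^2(x)>0$ for $x\in(0,1)$ by the definition of $\Omega$. The restriction to $x\in(0,1)$ in the conclusion matches the positivity of $\psi$ there. The remaining subtlety, which I would treat as inherited from \cite{felten} rather than reproved, is why $\psi^2$ concave is the right structural hypothesis on the step-weight; this is precisely the content of the general theorem being invoked, so I would simply cite it.
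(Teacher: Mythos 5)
Your strategy---obtain the lemma by specializing Felten's Theorem~1---is exactly the paper's own treatment: the paper gives no proof at all, only the remark that the lemma ``is a corollary of a more general theorem that was proved for positive linear operators preserving constants.'' The genuine gap is that the version of that general theorem you write down is false, so the specialization you perform cannot be the right one. You assert that for positive linear $L$ preserving only constants one already has
\[
|f(x)-L(f,x)| \leq 4\, K_{2,\psi}\left(f, \frac{L((e_1-x)^2,x)}{\psi^2(x)}\right),
\]
and that $L(e_1)=e_1$ enters only to simplify the second moment. Take $L(f,x):=f(x_0)$ for a fixed $x_0\in[0,1]$; this operator is positive, linear, and reproduces constants. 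For $f=e_1$ the left-hand side equals $|x-x_0|>0$ when $x\neq x_0$, while the right-hand side is $0$, because $K_{2,\psi}(e_1,A)=0$ for every $A\geq 0$ (choose $g=e_1$ in the infimum defining $K_{2,\psi}$: then $\|e_1-g\|=0$ and $\psi^2g''\equiv 0$). More generally, no estimate of the form $|f(x)-L(f,x)|\le c\,K_{2,\psi}(f,A(L,x))$ can hold for merely constant-preserving operators, since the second-order $K$-functional annihilates all linear functions, which such operators need not reproduce.

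Consequently, Felten's Theorem~1 necessarily contains an additional term controlled by the first moment $L(e_1-x,x)$ (a first-order modulus or $K$-functional term), and the hypothesis $L(e_1)=e_1$ is needed not merely to simplify the second moment but, crucially, to annihilate that term: $L(e_1-x,x)=L(e_1,x)-x\,L(e_0,x)=0$. Your algebraic identity $L((e_1-x)^2,x)=L(e_2,x)-2xL(e_1,x)+x^2L(e_0,x)=L(e_2,x)-x^2$, and your observation that this quantity is nonnegative by positivity, are correct and are indeed part of the reduction; what is missing is the correct statement of the theorem being cited. To repair the argument, either quote Felten's Theorem~1 with its first-moment term and note that it vanishes under preservation of linear functions, or prove the lemma directly: concavity of $\psi^2$ gives $|t-u|/\psi^2(u)\le |t-x|/\psi^2(x)$ for $u$ between $x$ and $t$, hence $|g(t)-g(x)-(t-x)g'(x)|\le (t-x)^2\|\psi^2 g''\|/\psi^2(x)$ for admissible $g$; applying $L$, using $L(e_0)=e_0$, $L(e_1)=e_1$ and the contraction property, and taking the infimum over $g$ yields the stated bound (with constant $4$, in fact $2$). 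This is precisely the scheme the paper uses to prove its Lemma~\ref{mainlemma}, where the concavity step is replaced by the explicit computation \ineq{auxbusta} for $\psi=\varphi^\lambda$.
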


\begin{lemma}[Bustamante~\cite{busta}*{Theorem 11}] \label{busta}
Suppose that $\psi \in \Omega$
 and
 $L:  C[0,1] \mapsto  C[0,1]$ is a positive linear operator preserving linear functions. Then, for any $f\in  C[0,1]$ and $x\in (0,1)$, one has
 \[
 |f(x)-L(f, x)| \leq \left( \frac 32 + \frac{3}{2 h^2 \psi^2(x)} \left( L(e_2, x)-x^2\right) \right) \omega_2^\psi(f, h).
 \]
\end{lemma}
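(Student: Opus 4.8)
The plan is to compare $L(f,\cdot)$ with $L(g,\cdot)$ for a conveniently chosen smooth $g$, exploiting that a positive operator reproducing $e_0$ and $e_1$ is a contraction on $C[0,1]$. Indeed, since $L(e_0)=e_0$, positivity forces $\norm{L(u)}\le\norm{u}$ for all $u\in C[0,1]$, so for every $g$ with $g'\in\AC_\loc(0,1)$ and $\norm{\psi^2 g''}<\infty$ the identity $f-Lf=(f-g)-L(f-g)+(g-Lg)$ gives
\[
|f(x)-L(f,x)|\le |L(f-g,x)|+|(f-g)(x)|+|L(g,x)-g(x)|\le 2\norm{f-g}+|L(g,x)-g(x)| .
\]
To handle the last term I would use Taylor's formula $g(t)-g(x)-g'(x)(t-x)=\int_x^t(t-u)g''(u)\,du$ and apply $L$; since $L(e_0,x)=1$ and $L(e_1,x)=x$, the polynomial part is annihilated and only the remainder survives. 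This is exactly the mechanism behind \lem{fel}, but carrying it out directly keeps the sharp numerical constant.

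The concavity of $\psi^2$ enters through the elementary estimate
\[
\left|\int_x^t\frac{|t-u|}{\psi^2(u)}\,du\right|\le\frac{(t-x)^2}{\psi^2(x)},
\]
which follows because, for $u$ between $x$ and $t$, concavity and $\psi^2\ge 0$ give $\psi^2(u)\ge\frac{|t-u|}{|t-x|}\,\psi^2(x)$, whence $1/\psi^2(u)\le |t-x|\big/\bigl(|t-u|\,\psi^2(x)\bigr)$ and the integral collapses. Consequently $|L(g,x)-g(x)|\le \psi^{-2}(x)\,\norm{\psi^2 g''}\,L\bigl((\cdot-x)^2,x\bigr)$, and using $L(e_0,x)=1$, $L(e_1,x)=x$ once more, $L\bigl((\cdot-x)^2,x\bigr)=L(e_2,x)-x^2$. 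Collecting the two contributions yields, for every admissible $g$,
\[
|f(x)-L(f,x)|\le 2\norm{f-g}+\frac{L(e_2,x)-x^2}{\psi^2(x)}\,\norm{\psi^2 g''},
\]
in which the coefficient $\bigl(L(e_2,x)-x^2\bigr)/\psi^2(x)$ is already the one appearing in the claimed inequality.

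It remains to feed in a single well-chosen $g=g_h$. The lemma follows once I produce, for each $h>0$, a function $g_h$ with $g_h'\in\AC_\loc(0,1)$ and
\[
\norm{f-g_h}\le\tfrac34\,\omega_2^\psi(f,h)\andd\norm{\psi^2 g_h''}\le\frac{3}{2h^2}\,\omega_2^\psi(f,h),
\]
since substituting these two bounds into the displayed estimate gives precisely $\bigl(\frac32+\frac{3}{2h^2\psi^2(x)}(L(e_2,x)-x^2)\bigr)\omega_2^\psi(f,h)$. I expect this regularization to be the main obstacle: $g_h$ must be a Steklov-type average of $f$ whose averaging step is scaled by the weight $\psi$, and the degeneration of $\psi$ at the endpoints together with the $x$-dependent step $h\psi(\cdot)$ built into $\omega_2^\psi$ makes both the second-derivative bound and the sharp constants $\tfrac34$ and $\tfrac32$ delicate to establish. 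Everything preceding this step is the soft contraction-plus-Taylor reduction above; the entire quantitative content of the lemma is concentrated in the construction of $g_h$.
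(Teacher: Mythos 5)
Your soft reduction is correct: the contraction property, the decomposition $f-Lf=(f-g)-L(f-g)+(g-Lg)$, the Taylor remainder, the concavity estimate $\psi^2(u)\ge\frac{|t-u|}{|t-x|}\psi^2(x)$, and the resulting bound
\[
|f(x)-L(f,x)|\le 2\norm{f-g}+\frac{L(e_2,x)-x^2}{\psi^2(x)}\norm{\psi^2 g''}
\]
are all valid, and this is in fact the same mechanism the paper uses to prove its own \lem{mainlemma} (and that Felten uses for \lem{fel}). But note that the paper does not prove \lem{busta} at all: it is quoted from Bustamante, and the paper explicitly remarks that this argument recovers the lemma only ``if one does not worry about the constants,'' via the non-explicit equivalence $K_{2,\psi}(f,h^2)\le c\,\omega_2^\psi(f,h)$ (valid, moreover, only for $0<h\le c_0$, by \cite{dt}*{Theorem 2.1.1}). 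So what you have actually established is the K-functional estimate, i.e.\ the analogue of \ineq{mainest}, not the stated lemma.

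The genuine gap is the final step, which you acknowledge but do not carry out: the existence, for \emph{every} $h>0$ and every $\psi\in\Omega$, of a function $g_h$ with $g_h'\in\AC_\loc(0,1)$,
\[
\norm{f-g_h}\le\tfrac34\,\omega_2^\psi(f,h)\andd\norm{\psi^2 g_h''}\le\tfrac{3}{2h^2}\,\omega_2^\psi(f,h).
\]
This is not a routine regularization; it is the entire quantitative content of Bustamante's theorem. A weighted Steklov mean with $x$-dependent step $h\psi(x)$ does not obviously produce these constants (or any absolute constants) because the step degenerates at the endpoints, and the generic Ditzian--Totik equivalence gives neither explicit constants nor validity for all $h$. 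Constructing such a $g_h$ with sharp constants is precisely what Bustamante (building on P\u{a}lt\u{a}nea--Gonska type smoothing results) proves, and it is at least as hard as the lemma itself. As written, your argument proves the weaker, constant-free version already recorded in the paper's remark after the lemma, and the claimed inequality with $\frac32$ and $\frac{3}{2h^2}$ remains unproven.
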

If one does not worry about the constants then \lem{busta} follows from \lem{fel} provided that $\psi$ is such that
$K_{2, \psi}(f, h^2) \leq c  \omega_2^\psi(f, h)$.

Since
\[
\varphi^\lambda  \in\Omega  \quad \mbox{\rm if and only if}\quad  0\leq \lambda\leq 1,
\]
we  conclude that Lemmas~\ref{fel} and \ref{busta} hold  for $\psi := \varphi^\lambda$ with $0\leq \lambda\leq 1$.

We will now provide a rather elementary proof  that a similar statement (we do not worry about constants) is valid for all $0\leq \lambda < 2$ (for $1<\lambda <2$ this seems to be a new result).

\begin{lemma} \label{mainlemma}
If $L:  C[0,1] \mapsto  C[0,1]$ is a positive linear operator preserving linear functions,  then for any $0\leq \lambda<2$, $f\in  C[0,1]$, $x \in[0,1]$, $\xxx\in (0,1)$ and $h>0$, one has
 \be \label{mainest}
 |f(x)-L(f, x)| \leq \left(  2 + \frac{4}{2-\lambda} \cdot \frac{ L(e_2, x) - x^2 + 2 (x-\xxx)^2 }{h^2 \varphi^{2\lambda}(\xxx)} \right) K_{2,\varphi^\lambda}(f,h^2) .
 \ee
\end{lemma}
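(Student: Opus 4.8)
The plan is to run the standard $K$-functional comparison, but with one twist that is forced on us by the fact that $x$ may be an endpoint (where $\varphi(x)=0$): instead of expanding the smooth comparison function around $x$, I will expand it around the interior point $\xxx$. Fix $g$ with $g'\in\AC_\loc(0,1)$. Since $L$ is positive and reproduces constants, $|f(x)-L(f,x)|\le|f(x)-g(x)|+L(|f-g|,x)+|g(x)-L(g,x)|\le 2\|f-g\|+|g(x)-L(g,x)|$, which accounts for the additive $2$ and for the $\|f-g\|$ part of the $K$-functional.

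For the smooth term, let $P(t):=g(\xxx)+g'(\xxx)(t-\xxx)$ be the tangent line at $\xxx$ and $\rho:=g-P$, so that $\rho(t)=\int_\xxx^t (t-u)g''(u)\,du$. Because $L$ preserves linear functions, $L(P,\cdot)=P$, hence $g(x)-L(g,x)=\rho(x)-L(\rho,x)$ and $|g(x)-L(g,x)|\le|\rho(x)|+L(|\rho|,x)$. Bounding $|g''(u)|\le\varphi^{-2\lambda}(u)\|\varphi^{2\lambda}g''\|$ gives $|\rho(t)|\le\|\varphi^{2\lambda}g''\|\,G(t)$, where $G(t):=\bigl|\int_\xxx^t|t-u|\varphi^{-2\lambda}(u)\,du\bigr|$. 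The whole estimate now rests on the pointwise calculus inequality
\[
 G(t)\le\frac{4}{2-\lambda}\,\frac{(t-\xxx)^2}{\varphi^{2\lambda}(\xxx)},\qquad t\in[0,1].
\]
Granting this, $|\rho(x)|\le\frac{4}{2-\lambda}(x-\xxx)^2\varphi^{-2\lambda}(\xxx)\|\varphi^{2\lambda}g''\|$ and, applying $L$ in the variable $t$ together with $L((\cdot-\xxx)^2,x)=L(e_2,x)-x^2+(x-\xxx)^2$ (a consequence of $L$ reproducing $e_0,e_1$), $L(|\rho|,x)\le\frac{4}{2-\lambda}\bigl(L(e_2,x)-x^2+(x-\xxx)^2\bigr)\varphi^{-2\lambda}(\xxx)\|\varphi^{2\lambda}g''\|$. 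Adding the two contributions reproduces the numerator $L(e_2,x)-x^2+2(x-\xxx)^2$; finally writing the $\|\varphi^{2\lambda}g''\|$-factor as $h^{-2}\cdot h^2\|\varphi^{2\lambda}g''\|$ and taking the infimum over $g$ converts $2\|f-g\|+(\cdots)h^2\|\varphi^{2\lambda}g''\|$ into $(2+(\cdots))K_{2,\varphi^\lambda}(f,h^2)$, which is exactly \ineq{mainest}.

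It remains to prove the displayed calculus inequality, which is where the real work and the main obstacle lie. By the symmetry $u\mapsto 1-u$ I may assume $\xxx\le 1/2$. When $t$ lies on the far side of $\xxx$ from the near endpoint $0$ and does not cross $1/2$, $\varphi^{-2\lambda}$ is monotone on the interval of integration and bounded by $\varphi^{-2\lambda}(\xxx)$, giving at once $G(t)\le\tfrac12(t-\xxx)^2\varphi^{-2\lambda}(\xxx)$; the case $t>1/2$ is reduced, after reflecting $u\mapsto1-u$ on $[1/2,t]$, to the singular case below at base point $1/2$, and then combined using $\varphi^{-2\lambda}(1/2)\le\varphi^{-2\lambda}(\xxx)$. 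The essential (singular) case is $0\le t<\xxx$, where the path of integration runs toward the blow-up of $\varphi^{-2\lambda}$ at $0$. Here I bound $\varphi^{-2\lambda}(u)\le 2^\lambda u^{-\lambda}$ for $u\le1/2$ and reduce everything to the sharp one-variable inequality
\[
 \int_t^\xxx (u-t)\,u^{-\lambda}\,du\le\frac{(\xxx-t)^2}{2-\lambda}\,\xxx^{-\lambda},\qquad 0\le t\le\xxx,
\]
which I expect to be the crux. I would prove it by showing that the difference $\phi(t)$ of the two sides vanishes at both endpoints $t=0$ and $t=\xxx$ (the value at $0$ is where the factor $1/(2-\lambda)$ is pinned down, via $\int_0^\xxx u^{1-\lambda}\,du=\xxx^{2-\lambda}/(2-\lambda)$) and then analyzing $\phi'$: from $\phi''(t)=\tfrac{2}{2-\lambda}\xxx^{-\lambda}-t^{-\lambda}$ one sees $\phi'$ is unimodal with $\phi'(0^+)>0$ and $\phi'(\xxx)=0$, so $\phi$ increases then decreases on $[0,\xxx]$ and stays nonnegative. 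Combining this with $2^\lambda\le 4$ (which is exactly where the constant $4$ comes from, saturating as $\lambda\to2-$) and with $(1-\xxx)^{-\lambda}\ge1$ yields the bound in the singular case, and careful bookkeeping of the three cases gives the single constant $\tfrac{4}{2-\lambda}$. The main difficulty throughout is that no pointwise-in-$t$ estimate anchored at $x$ can hold when $x$ is near the boundary — it is precisely the re-anchoring at the interior point $\xxx$ that makes the $\varphi^{-2\lambda}(\xxx)$-weighted pointwise inequality both true and strong enough to close the argument.
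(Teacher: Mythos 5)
Your proposal is correct, and its operator-theoretic skeleton is exactly the paper's: Taylor expansion of $g$ anchored at the interior point $\xi$ (the paper phrases this as bounding $|g(t)-g(\xi)-(t-\xi)g'(\xi)|$ rather than subtracting the tangent line, but it is the same step), positivity of $L$ applied to the pointwise bound so that $(\cdot-\xi)^2$ turns into $L(e_2,x)-x^2+(x-\xi)^2$, adding the term at $t=x$ to get the numerator $L(e_2,x)-x^2+2(x-\xi)^2$, and finally the infimum over near-optimal $g$. Where you genuinely differ is in the proof of the calculus inequality $\int_t^\xi (u-t)\varphi^{-2\lambda}(u)\,du\le\frac{4}{2-\lambda}\frac{(t-\xi)^2}{\varphi^{2\lambda}(\xi)}$. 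The paper (assuming $\xi\le 1/2$) splits at $\xi/2$ and $1-\xi/2$, so each $t$ falls into exactly one case: in the middle it uses $\varphi(u)\ge 2^{-1/2}\varphi(\xi)$, and for $t<\xi/2$ the crude chain $\int_t^\xi(u-t)\varphi^{-2\lambda}(u)\,du\le (1-\xi)^{-\lambda}\int_0^\xi u^{1-\lambda}\,du=\frac{\xi^2}{(2-\lambda)\varphi^{2\lambda}(\xi)}\le\frac{4(\xi-t)^2}{(2-\lambda)\varphi^{2\lambda}(\xi)}$, the last step using $\xi\le 2(\xi-t)$. You instead split at $\xi$ and $1/2$, prove a sharp inequality (with equality at $t=0$) for the whole range $0\le t<\xi$ via the endpoint-values-plus-$\phi''$ analysis, and handle $t>1/2$ by reflection; all of this is sound, and your unimodality argument for $\phi'$ does close ($\phi'(\xi)=0$ with $\phi''(\xi)>0$ forces $\phi'\le 0$ past its minimum, so $\phi$ increases then decreases between two zeros). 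The one place your "careful bookkeeping" is truly needed is the case $t>1/2$: naively adding $\tfrac12(t-\xi)^2\varphi^{-2\lambda}(\xi)$ from $[\xi,1/2]$ and $\frac{4}{2-\lambda}(t-1/2)^2\varphi^{-2\lambda}(\xi)$ from $[1/2,t]$ overshoots the stated constant; you must keep $\int_\xi^{1/2}(t-u)\,du=\frac{(t-\xi)^2-(t-1/2)^2}{2}$ and use $\tfrac12\le\frac{4}{2-\lambda}$, after which the single constant $\frac{4}{2-\lambda}$ does come out. In exchange for being slightly longer than the paper's argument, your route explains where the factor $\frac{1}{2-\lambda}$ is pinned down (it is attained at $t=0$), while the paper's disjoint case split avoids any summing of constants.
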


\begin{proof}  We first show that for any $g\in C[0,1]$ such that $g'\in \AC_\loc(0,1)$,
\be \label{auxbusta}
\left|g(t)-g(\xxx)-(t-\xxx)g'(\xxx)\right|\le\frac{4}{2-\lambda} \frac{(t-\xxx)^2}{\varphi^{2\lambda}(\xxx)}   \norm{\varphi^{2\lambda} g''}   ,
\ee
for all $\xxx\in(0,1)$ and $t\in[0,1]$.

Since $g'\in \AC_\loc(0,1)$ we have
\[
\left|g(t)- g(\xxx)-(t-\xxx)g'(\xxx) \right|=\left|\int_\xxx^t(t-u)g''(u)du\right|  \le  \norm{\varphi^{2\lambda}g''}  \int_t^\xxx\frac{u-t}{\varphi^{2\lambda}(u)}du.
\]
Without loss of generality, assume that  $\xxx\in (0,1/2]$.  If $\xxx/2\le t\le 1-\xxx/2$, then $\varphi (u)\ge \varphi(\xxx/2)\ge 2^{-1/2} \varphi(\xxx)$ for any $u$ between $t$ and $\xxx$,
and so
\[
\int_{t}^\xxx\frac{u-t}{\varphi^{2\lambda}(u)}\,du \le \frac{4}{\varphi^{2\lambda}(\xxx)}\int_{t}^\xxx  (u-t)\,du=2\frac{(t-\xxx)^2}{\varphi^{2\lambda}(\xxx)}.
\]
If $0\le t<\xxx/2$, then
\begin{eqnarray*}
\int_{t}^\xxx\frac{u-t}{\varphi^{2\lambda}(u)}\,du & \le & \int_{0}^\xxx\frac{u}{\varphi^{2\lambda}(u)}\,du
\le \frac1{(1-\xxx)^{\lambda}}\int_{0}^\xxx    u^{1-\lambda} \, du
=\frac1{2-\lambda}\frac{\xxx^2}{\varphi^{2\lambda}(\xxx)} \\
&\le & \frac4{2-\lambda}\frac{(\xxx-t)^2}{\varphi^{2\lambda}(\xxx)} .
\end{eqnarray*}
For the remaining case $1-\xxx/2<t \leq 1$, the proof is exactly the same, and so \ineq{auxbusta} is verified.

Since $L$ is positive we conclude that, for any functions $F,G\in C[0,1]$ such that $|F(t)| \leq G(t)$, $t\in[0,1]$, the inequality
$|L(F, x)|\leq L(G, x)$ is valid for all $x\in[0,1]$. Applying this observation to \ineq{auxbusta} and recalling that $L$ is linear and preserves linear functions we immediately get
\[
|L(g,x) - g(\xxx)-(x-\xxx)g'(\xxx)| \leq \frac{4}{2-\lambda} \frac{\norm{\varphi^{2\lambda} g''}}{\varphi^{2\lambda}(\xxx)} \left( L(e_2, x)-2x \xxx + \xxx^2 \right) , \quad x\in [0,1].
\]
Together with \ineq{auxbusta}   (with $t$ replaced by $x$) this yields
\begin{eqnarray*}
|L(g,x)-g(x)| & \leq &    |L(g,x) - g(\xxx)-(x-\xxx)g'(\xxx)| + |g(x) - g(\xxx)-(x-\xxx)g'(\xxx)| \\
& \leq &
\frac{4}{2-\lambda} \frac{\norm{\varphi^{2\lambda} g''}}{\varphi^{2\lambda}(\xxx)}  \left( L(e_2, x)-2x \xxx + \xxx^2 \right) +
\frac{4}{2-\lambda} \frac{\norm{\varphi^{2\lambda} g''}}{\varphi^{2\lambda}(\xxx)} (x-\xxx)^2 \\
& = &
\frac{4}{2-\lambda} \frac{\norm{\varphi^{2\lambda} g''}}{\varphi^{2\lambda}(\xxx)} \left(  L(e_2, x) - x^2 + 2 (x-\xxx)^2 \right) .
\end{eqnarray*}

Suppose now that, for each $\eps>0$, $\geps\in C[0,1]$ with $\geps' \in\AC_\loc(0,1)$ is such that
\[
\|f-\geps \|+h^2\|\varphi^{2\lambda} \geps ''\| \leq K_{2,\varphi^\lambda}(f,h^2) + \eps .
\]
Taking into account that any positive linear  operator $L$  preserving constants is a contraction (\ie $|L(F,x)| \leq \norm{F}$)     we have
\begin{eqnarray*}
|f(x)-L(f,x)| &\leq&  |f(x)-\geps(x)|+|\geps(x)-L(\geps,x)|+|L(\geps-f,x)|\\
&\leq &
2 \norm{f-\geps}
+ \frac{4}{2-\lambda} \frac{\norm{\varphi^{2\lambda} \geps''}}{\varphi^{2\lambda}(\xxx)}\left(  L(e_2, x) - x^2 + 2 (x-\xxx)^2 \right) \\
&\leq&
\left(  2 + \frac{4}{2-\lambda} \cdot \frac{ L(e_2, x) - x^2 + 2 (x-\xxx)^2 }{h^2\varphi^{2\lambda}(\xxx)} \right) (K_{2,\varphi^\lambda}(f,h^2) + \eps) ,
\end{eqnarray*}
and \ineq{mainest} follows.
\end{proof}

\begin{remark}
Clearly, \lem{mainlemma} remains valid if $\varphi^\lambda$ is replaced by a function $\phi$ such that, for $\xxx\in(0,1)$ and   $t\in[0,1]$,
 \be \label{glin}
 \int_t^\xxx\frac{u-t}{\phi^{2}(u)}du \leq c \frac{(t-\xxx)^2}{\phi^{2}(\xxx)}.
 \ee
In particular, this inequality is satisfied if $\phi$ is such that
 \begin{enumerate}[(i)]
\item
 $x^{-\beta}\phi(x)$ and $(1-x)^{-\beta}\phi(x)$  are, respectively, quasi decreasing and quasi increasing on $(0,1)$ for some $\beta < 1$ ($g$ is quasi decreasing if $g(x) \geq c g(y)$ for $x \leq y$ for some absolute constant $c$; $g$ is quasi increasing if $-g$ is quasi decreasing), and
 \item $\phi(x) \geq c \max\{\phi(\epsilon),\phi(1-\epsilon)\}$, for any  $0\leq \epsilon \leq 1/2$ and $\epsilon \leq x \leq 1-\epsilon$.
  \end{enumerate}
For example, any $\phi$ such that  $\phi(x)\sim\phi(1-x)$ and  $\phi^2$ is concave on $[0,1]$ satisfies these conditions.  Note also that \ineq{glin} is not valid for
$\phi(x) = \varphi^2(x)$ (which is concave on $[0,1]$) and so we cannot replace the inequality ``$\beta <1$'' in (i) by ``$\beta\leq 1$''.
\end{remark}

Note that if $L: C[0,1]\mapsto C[0,1]$ is a positive linear operator preserving linear functions, then $L(f, 0)=f(0)$ and $L(f, 1)=f(1)$ for any $f\in C[0,1]$. Indeed, suppose that it is not the case and, without loss of generality, assume that $\epsilon := L(f, 0) - f(0)>0$ for some $f\in C[0,1]$. Continuity of $f$  implies that there exists $m\in\R$ depending on $f$ such that $l(x):= mx + L(f, 0)$ satisfies $l(x) \geq f(x) + \epsilon/2$ for all $x\in [0,1]$.
(For example, one can choose $m := 2\norm{f}/\delta$ where $\delta>0$ is such that $|f(x)-f(0)| <\epsilon/2$ for $0\leq x \leq \delta$.)
Then $l(x) = L(l, x) \geq L(f, x) + \epsilon/2$ and letting $x=0$ we get a contradiction.

The above observation implies that, if  $L_n: C[0,1] \mapsto\Poly_n$ is a sequence of positive linear polynomial operators preserving linear functions and such that
$L_n(f, \cdot)\in\Poly_2$ provided $f\in\Poly_2$,
then $L_n(e_2, x)  = x^2+\a_n \varphi^2(x)$, $\alpha_n>0$.
Taking into account a well known fact that
 $K_{2,\varphi^\lambda}(f,h^2) \sim  \omega_2^{\varphi^\lambda} (f, h)$, for $0<h\leq c_0$ (see \cite{dt}*{Theorem 2.1.1}), we immediately have the following consequence of \lem{mainlemma} by setting
 \[
 \xi :=
 \begin{cases}
 x , & \mbox{\rm if }\; \b_n \leq x \leq 1-\b_n, \\
 x + \sqrt{\b_n}\varphi(x) , & \mbox{\rm if }\; 0\leq x < \b_n , \\
 x- \sqrt{\b_n}\varphi(x), & \mbox{\rm if }\; 1-\b_n<x \leq 1 ,
 \end{cases}
 \]
where $\b_n := \min\{\a_n, 1/4\}$, and noting that
\begin{eqnarray*}
  \frac{ \a_n \varphi^2(x) + 2 (x-\xxx)^2 }{ \varphi^{2\lambda}(\xxx)}
&\leq&
 \begin{cases}
\a_n \varphi^{2-2\lambda}(x), & \mbox{\rm if }\; x\in [\b_n,  1-\b_n], \\
 12 \a_n \b_n^{-\lambda/2} \varphi^{2-\lambda}(x), & \mbox{\rm if }\; x\in (0, \b_n)\cup (1-\b_n,1),\\
 \end{cases}
\\
& \leq&
 {50 \a_n \varphi^{2-\lambda}(x) \over \left( \varphi(x) + \sqrt{\b_n} \right)^\lambda }.
\end{eqnarray*}

\begin{corollary} \label{maincor}
If $L_n:  C[0,1] \mapsto \Poly_n$ is a sequence of positive linear polynomial operators preserving linear functions,  then for any $0\leq \lambda< 2$, $f\in  C[0,1]$, $x\in [0,1]$ and $0<h\leq c_0$, one has
 \be \label{ineq:maincor}
 |f(x)-L_n(f, x)| \leq c \left(  1 +  {\a_n \varphi^{2-\lambda}(x) \over h^2 \left( \varphi(x) + \sqrt{\min\{\a_n, 1/4\}} \right)^\lambda }  \right) \omega_2^{\varphi^\lambda} (f, h),
 \ee
 where $\a_n>0$ is such that $L_n(e_2, x) - x^2 = \a_n \varphi^2(x)$,
$c_0$ is some absolute constant, and the constant  $c$ depends on $\lambda$ as $\lambda \to 2-$.
\end{corollary}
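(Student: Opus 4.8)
The plan is to obtain \ineq{ineq:maincor} by specializing \lem{mainlemma}, making the optimal choice of the free point $\xi$, and then passing from the $K$-functional to the modulus. First I would insert the hypothesis $L_n(e_2,x)-x^2=\a_n\varphi^2(x)$ into \ineq{mainest}; the bracketed factor becomes
\[
2+\frac{4}{2-\lambda}\cdot\frac{\a_n\varphi^2(x)+2(x-\xi)^2}{h^2\varphi^{2\lambda}(\xi)}.
\]
(That $\a_n\ge 0$ is automatic: applying positivity of $L_n$ to $(e_1-xe_0)^2\ge 0$ and using $L_n(e_1)=e_1$, $L_n(e_0)=e_0$ gives $L_n(e_2,x)-x^2\ge 0$; if $\a_n=0$ the second term vanishes and there is nothing to prove, so I assume $\a_n>0$.) Everything then reduces to bounding the quotient $Q(x):=\bigl(\a_n\varphi^2(x)+2(x-\xi)^2\bigr)/\varphi^{2\lambda}(\xi)$, since the factor $4/(2-\lambda)$, the $h^{-2}$, and all absolute constants may be absorbed into the constant $c$, which is permitted to blow up as $\lambda\to 2-$.

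The guiding idea for choosing $\xi$ is that $\varphi^{2\lambda}(\xi)$ in the denominator only threatens to vanish near the endpoints, where $\varphi$ is small. Away from the endpoints I take $\xi=x$, which annihilates $(x-\xi)^2$ and gives $Q(x)=\a_n\varphi^{2-2\lambda}(x)$; near an endpoint I shift $\xi$ inward by $\sqrt{\b_n}\,\varphi(x)$ with $\b_n:=\min\{\a_n,1/4\}$, exactly as in the displayed definition of $\xi$, to keep $\varphi(\xi)$ bounded below. By symmetry it is enough to analyze $0\le x<\b_n$. There $x<\b_n\le 1/4$ forces $\xi=x+\sqrt{\b_n}\,\varphi(x)<1/2$, hence $1-\xi>1/2$ and
\[
\varphi^2(\xi)=\xi(1-\xi)\ge \tfrac12\xi\ge \tfrac12\sqrt{\b_n}\,\varphi(x),
\]
so $\varphi^{2\lambda}(\xi)\ge 2^{-\lambda}\b_n^{\lambda/2}\varphi^\lambda(x)$. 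Since $(x-\xi)^2=\b_n\varphi^2(x)$ and $\b_n\le\a_n$ give $\a_n\varphi^2(x)+2(x-\xi)^2\le 3\a_n\varphi^2(x)$, I obtain $Q(x)\le 3\cdot 2^\lambda\a_n\b_n^{-\lambda/2}\varphi^{2-\lambda}(x)\le 12\,\a_n\b_n^{-\lambda/2}\varphi^{2-\lambda}(x)$, the second branch of the two-case estimate.

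It remains to merge the two branches into $Q(x)\le 50\,\a_n\varphi^{2-\lambda}(x)/(\varphi(x)+\sqrt{\b_n})^\lambda$, which comes down to comparing $\varphi(x)$ with $\sqrt{\b_n}$: on $[\b_n,1-\b_n]$ one has $\varphi^2(x)\ge\b_n(1-\b_n)\ge \tfrac34\b_n$, so $\varphi(x)+\sqrt{\b_n}\le c\,\varphi(x)$; on $(0,\b_n)$ one has $\varphi(x)\le\sqrt{x}<\sqrt{\b_n}$, so $\varphi(x)+\sqrt{\b_n}<2\sqrt{\b_n}$. Feeding the resulting bound for $Q(x)$ back into \ineq{mainest} yields \ineq{ineq:maincor} with $K_{2,\varphi^\lambda}(f,h^2)$ in place of $\omega_2^{\varphi^\lambda}(f,h)$, and the equivalence $K_{2,\varphi^\lambda}(f,h^2)\sim\omega_2^{\varphi^\lambda}(f,h)$ recorded above for $0<h\le c_0$ completes the argument. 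I expect the endpoint lower bound on $\varphi(\xi)$ to be the only delicate point—one must check that the inward shift keeps $\xi$ strictly inside $(0,1/2)$ so that $1-\xi>1/2$ holds uniformly in $x$ and $\lambda$; the rest is routine bookkeeping.
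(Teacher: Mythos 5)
Your proposal is correct and takes essentially the same route as the paper's own proof: the identical choice of $\xi$ with $\b_n=\min\{\a_n,1/4\}$, the same two-case bound ($\a_n\varphi^{2-2\lambda}(x)$ in the middle, $12\,\a_n\b_n^{-\lambda/2}\varphi^{2-\lambda}(x)$ near the endpoints), the same merging into the factor $\left(\varphi(x)+\sqrt{\b_n}\right)^{-\lambda}$, and the same appeal to $K_{2,\varphi^\lambda}(f,h^2)\sim\omega_2^{\varphi^\lambda}(f,h)$ for $0<h\le c_0$; you merely spell out details the paper leaves as ``noting that.'' The one point to add is that at $x=0$ and $x=1$ your $\xi$ equals $x\notin(0,1)$, so \lem{mainlemma} does not apply there, but the estimate is trivial at those points since, as observed in the paper, any positive linear operator preserving linear functions satisfies $L_n(f,0)=f(0)$ and $L_n(f,1)=f(1)$.
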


\begin{remark}
Estimate \ineq{ineq:maincor} implies the following weaker inequality
\[
|f(x)-L_n(f, x)| \leq c \left(  1 +  {\a_n \varphi^{2-2\lambda}(x) \over h^2   }  \right) \omega_2^{\varphi^\lambda} (f, h)
\]
which, in turn, yields
\[
|f(x)-L_n(f, x)| \leq c  \omega_2^{\varphi^\lambda} \left( f, \sqrt{\a_n} \, \varphi^{1-\lambda}(x) \right) .
\]
\end{remark}

In the next section, we discuss some applications for the classical Bernstein polynomials (clearly, similar results can be stated for many other positive linear polynomial operators) and show how our estimates can be used for $\lambda\in(1,2)$.

\subsection{Some applications for Bernstein polynomials.} \label{whyuseful}

Let
\[
p_{n,k}(x):={\binom nk} x^k(1-x)^{n-k},\quad 0\le k\le n,
\]
be the Bernstein fundamental polynomials, and recall that the classical Bernstein operator
\[
B_n(f, x) := \sum_{k=0}^{n} f(k/n)p_{n,k}(x)
\]
 is positive, linear, preserves linear functions and
$B_n(e_2, x) - x^2 = \varphi^2(x)/n$. \cor{maincor} (with $\a_n = 1/n$) implies the following result.

\begin{corollary} \label{clBern}
If $n\in\N$ and $B_n: C[0,1]\mapsto \Poly_n$ is the classical Bernstein polynomial, then, for any $0\leq \lambda< 2$, $f\in  C[0,1]$, $x\in [0,1]$ and $0<h\leq c_0$, one has
 \be \label{ineq:corbern}
 |f(x)-B_n(f, x)| \leq c \left(  1 +  {\varphi^{2-\lambda}(x) \over h^2 n \left( \varphi(x) + n^{-1/2}  \right)^\lambda }  \right) \omega_2^{\varphi^\lambda} (f, h),
 \ee
 where $c_0$ is some absolute constant, and the constant  $c$ depends on $\lambda$ as $\lambda \to 2-$.
 In particular,
 \be \label{inbernstein}
  |f(x)-B_n(f, x)| \leq c  \omega_2^{\varphi^\lambda} \left( f, \gamma_{n, \lambda}(x)   \right) ,
 \ee
 where
\begin{eqnarray*}
\gamma_{n, \lambda}(x) &:=&  n^{-1/2} \varphi^{1-\lambda/2}(x) \left(\varphi(x) + n^{-1/2} \right)^{-\lambda/2} \\
& \sim &
\begin{cases}
\left[ n^{-1}x (1-x) \right]^{(2-\lambda)/4 }, & \mbox{\rm if }\; x\in\left[0, n^{-1}\right]\cup \left[1-n^{-1},1\right] , \\
n^{-1/2} [x(1-x)]^{(1-\lambda)/2}  ,  & \mbox{\rm if }\; n^{-1}< x <1-n^{-1} .
\end{cases}
\end{eqnarray*}
\end{corollary}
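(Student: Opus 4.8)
The plan is to derive both displayed inequalities directly from \cor{maincor}, so the real content is a short verification followed by bookkeeping. First I would confirm that $B_n$ meets the hypotheses of \cor{maincor}: it is a positive linear operator mapping $C[0,1]$ into $\Poly_n$ and reproducing linear functions, since $B_n(e_0)=e_0$ and $B_n(e_1)=e_1$. The only quantitative datum needed is the value of $\a_n$ in $B_n(e_2,x)-x^2=\a_n\varphi^2(x)$; the classical identities $\sum_k k\,p_{n,k}(x)=nx$ and $\sum_k k(k-1)p_{n,k}(x)=n(n-1)x^2$ give $B_n(e_2,x)=x^2+x(1-x)/n=x^2+\varphi^2(x)/n$, whence $\a_n=1/n$.

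Substituting $\a_n=1/n$ into \ineq{ineq:maincor} produces \ineq{ineq:corbern} almost verbatim; the one subtlety is that \cor{maincor} carries $\sqrt{\min\{\a_n,1/4\}}=\sqrt{\min\{1/n,1/4\}}$ where the target has $n^{-1/2}$. For $n\geq 4$ these agree exactly, while for $n\in\{1,2,3\}$ one has $\sqrt{\min\{1/n,1/4\}}=1/2$, which is comparable to $n^{-1/2}$ (both lie in a fixed interval bounded away from $0$ and $\infty$). Hence $\varphi(x)+n^{-1/2}\sim\varphi(x)+1/2$, the bracketed factor is altered only by an absolute multiplicative constant, and the discrepancy is absorbed into $c$. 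The dependence of $c$ on $\lambda$ as $\lambda\to 2-$ is inherited directly from \cor{maincor}.

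To obtain \ineq{inbernstein} I would, exactly as in the passage from \thm{mainth} to \cor{cormainth}, choose $h$ so that the factor in parentheses collapses to a constant. Taking $h:=\min\{c_0,1\}\,\gamma_{n,\lambda}(x)$, a direct substitution shows that $\varphi^{2-\lambda}(x)/\bigl(h^2 n(\varphi(x)+n^{-1/2})^\lambda\bigr)$ equals $\min\{c_0,1\}^{-2}$, so the bracketed factor reduces to the constant $1+\min\{c_0,1\}^{-2}$. Admissibility $0<h\leq c_0$ follows from the bound $\gamma_{n,\lambda}(x)\leq n^{(\lambda-2)/4}\varphi^{1-\lambda/2}(x)\leq 1$, which uses $\varphi(x)+n^{-1/2}\geq n^{-1/2}$ together with $\varphi(x)\leq 1/2$ and $\lambda<2$. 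Finally, monotonicity of $t\mapsto\omega_2^{\varphi^\lambda}(f,t)$ lets me replace the argument $\min\{c_0,1\}\gamma_{n,\lambda}(x)$ by $\gamma_{n,\lambda}(x)$, yielding \ineq{inbernstein}.

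The remaining task, and the only place calling for genuine estimation, is the two-regime equivalence for $\gamma_{n,\lambda}(x)$. The guiding point is that the two terms in $\varphi(x)+n^{-1/2}$ balance precisely when $\varphi(x)\sim n^{-1/2}$; since $\varphi(x)\sim\sqrt{x}$ near $0$ and $\sqrt{1-x}$ near $1$, this balance occurs at $x\sim n^{-1}$ and $x\sim 1-n^{-1}$, the threshold separating the two cases. On $[0,n^{-1}]\cup[1-n^{-1},1]$ one has $\varphi^2(x)=x(1-x)\leq n^{-1}$, hence $\varphi(x)\leq n^{-1/2}$, so $\varphi(x)+n^{-1/2}\sim n^{-1/2}$ and $\gamma_{n,\lambda}(x)\sim n^{(\lambda-2)/4}\varphi^{1-\lambda/2}(x)=[n^{-1}x(1-x)]^{(2-\lambda)/4}$; on $(n^{-1},1-n^{-1})$ one has $\varphi^2(x)\geq n^{-1}(1-n^{-1})$, hence $\varphi(x)\geq (2n)^{-1/2}$ for $n\geq 2$, so $\varphi(x)+n^{-1/2}\sim\varphi(x)$ and $\gamma_{n,\lambda}(x)\sim n^{-1/2}\varphi^{1-\lambda}(x)=n^{-1/2}[x(1-x)]^{(1-\lambda)/2}$. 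I expect the main obstacle to be purely this bookkeeping---deciding in each region which of $\varphi(x)$ and $n^{-1/2}$ dominates and tracking the resulting exponents---rather than any conceptual difficulty, reflecting that the statement is essentially a specialization of \cor{maincor}.
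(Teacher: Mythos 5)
Your proposal is correct and takes essentially the same route as the paper: the paper obtains \ineq{ineq:corbern} by applying \cor{maincor} with $\a_n=1/n$ (from $B_n(e_2,x)-x^2=\varphi^2(x)/n$), and \ineq{inbernstein} by the same choice of $h$ used in passing from \thm{mainth} to \cor{cormainth}. The extra details you supply (the $n\le 3$ discrepancy between $\sqrt{\min\{1/n,1/4\}}$ and $n^{-1/2}$, the admissibility of $h$, and the two-regime computation for $\gamma_{n,\lambda}$) are correct fill-ins of steps the paper leaves implicit.
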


\begin{remark}
Clearly, $\gamma_{n, \lambda}(x) \leq n^{-1/2} \varphi^{1-\lambda}(x)$ and so \ineq{inbernstein} immediately implies
\be \label{classdi}
  |f(x)-B_n(f, x)| \leq c \omega_2^{\varphi^\lambda} \left( f, n^{-1/2} \varphi^{1-\lambda}(x) \right) ,
\ee
which is the main result of \cite{d-jat} in the case $0\leq \lambda\leq 1$.
\end{remark}

\begin{remark}
For $\lambda=1$, \ineq{inbernstein} becomes
 \be \label{intach}
  |f(x)-B_n(f, x)| \leq c \omega_2^{\varphi} \left(f, n^{-1/2} \sqrt{ {\varphi(x) \over \varphi(x) + n^{-1/2} } }  \right) ,
 \ee
which is equivalent to \cite{tach}*{Theorem 1.1}.
\end{remark}

We will now  consider a very simple example in order to compare the estimates produced by different methods.

Suppose that one wants to know how well Bernstein polynomials approximate the function $\fep (x):= x^\ep$, $0<\ep <1$.
One can easily calculate (see also \cite{dt}*{Section 3.4}) that, for $0\leq \lambda\leq 2$,
 \[
 \omega_2^{\varphi^\lambda} (\fep, t) \sim
 \begin{cases}
 t^2, & \mbox{\rm if }\; \ep + \lambda - 2 \geq 0, \\
 t^{\ep/(1-\lambda/2)}, & \mbox{\rm if }\; \ep + \lambda - 2 < 0.
 \end{cases}
 \]
The classical results (estimate \ineq{classdi} for $\lambda=0$ and $\lambda=1$)   immediately yield
\be \label{classics}
|\fep(x)-B_n(\fep, x)| \leq c \left(n^{-1/2}  \varphi(x) \right)^\ep \andd
\norm{\fep - B_n(\fep, \cdot)} \leq c n^{-\ep} .
\ee
Using \ineq{classdi} for $0\leq \lambda\leq 1$, we may conclude  that
\be \label{tmp1}
|\fep(x)-B_n(\fep, x) | \leq c \left( n^{-1/2}  \varphi^{1-\lambda}(x)   \right)^{\ep/(1-\lambda/2)} ,
\ee
but this is not  better  than \ineq{classics} since, for all  $x,\lambda\in [0,1]$,
\[
\min\left\{ n^{-1/2}  \varphi(x), n^{-1} \right\} \leq   \left( n^{-1/2}  \varphi^{1-\lambda}(x)   \right)^{1/(1-\lambda/2)}.
\]
However, if we  choose $\lambda = 2-\ep$ (note that $1<\lambda<2$), then  $\omega_2^{\varphi^\lambda} (\fep, t) \sim t^2$,  and \ineq{inbernstein} yields
 \[
|\fep(x)-B_n(\fep, x) | \leq c n^{-1} {\varphi^{\ep}(x)  \over \left(\varphi(x) + n^{-1/2} \right)^{2-\ep} } .
\]
This implies
\be \label{tmpp}
|\fep(x)-B_n(\fep, x) | \leq c
\begin{cases}
 n^{-1}    \varphi^{2\ep-2}(x)  , & \mbox{\rm if }\; x\in [1/n,1-1/n] ,\\
\left(n^{-1/2}  \varphi(x) \right)^\ep , & \mbox{\rm if }\; x\in [0, 1/n)\cup (1-1/n] ,
\end{cases}
\ee
 which is better in the middle of $[0,1]$ than anything that one can get from \ineq{classics} or \ineq{tmp1}.
Now, the classical Voronovskaya theorem yields
\be \label{voron}
\lim_{n\to\infty} n\left( \fep(x)- B_n(\fep, x) \right) = -  {\varphi^2(x) \over 2} \fep''(x) = {\ep(1-\ep) \over 2} x^{\ep-2} \varphi^2(x) ,
\ee
and this implies that \ineq{tmpp} in the middle of $[0,1]$  cannot be improved (note that \ineq{voron} actually implies \ineq{tmpp} in the middle of $[0,1]$ for sufficiently large $n$ depending on $x$).

This elementary example illustrates that it is sometimes advantageous to work with moduli $\omega_2^{\varphi^\lambda}$ with $\lambda$'s greater than $1$.

\subsection{Genuine Bernstein-Durrmeyer operator}

Let
 $U_n: C[0,1]\mapsto\Poly_n$, $n\ge2$, be defined by
\[
 U_n (f, x):=f(0)(1-x)^n+f(1)x^n+(n-1)\sum_{k=1}^{n-1}p_{n,k}(x)\int_0^1p_{n-2,k-1}(t)f(t)dt.
\]
 It seems that operators $U_n$   were first considered by Goodman and Sharma in \cite{gs} (see \cite{gkr} for further discussions of the history of these operators as well as different names used for them in the literature).

Clearly, $U_n$ are positive linear operators with  $U_n (f, 0)=f(0)$ and $U_n (f, 1)=f(1)$.
Also,   it immediately follows from the following lemma that
 \be \label{opun}
U_n(e_0, x) = 1, \quad  U_n(e_1, x) = x \andd      U_n (e_2, x) = x^2 + \frac{2x(1-x)}{n+1} ,
 \ee
 and so operators $U_n$ preserve linear functions.

 \begin{lemma}
For any $n\geq 2$,
\be \label{mainun}
  U_n (e_i, x) =
{(n-1)! \, i! \over (n+i-1)!} \sum_{j=\max\{0, i-n\}}^{i-1}  {i-1 \choose j} {n \choose i-j}   x^{i-j},   \quad   i\geq 1,
\ee
and  $U_n (e_0, x) = 1$.
 \end{lemma}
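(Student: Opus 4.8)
The plan is to evaluate $U_n(e_i,\cdot)$ directly from its definition and to reduce the claim to a single binomial identity, which I would then establish by a differentiation argument.

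First, fix $i\ge 1$. Since $e_i(0)=0$ and $e_i(1)=1$, the definition of $U_n$ gives
\[
U_n(e_i,x)=x^n+(n-1)\sum_{k=1}^{n-1}p_{n,k}(x)\int_0^1 p_{n-2,k-1}(t)\,t^i\,dt .
\]
The inner integral is a Beta integral: writing $p_{n-2,k-1}(t)=\binom{n-2}{k-1}t^{k-1}(1-t)^{n-1-k}$ and using $\int_0^1 t^{a-1}(1-t)^{b-1}dt=\frac{(a-1)!\,(b-1)!}{(a+b-1)!}$ with $a=k+i$, $b=n-k$, I get $\int_0^1 p_{n-2,k-1}(t)\,t^i\,dt=\frac{(n-2)!\,(k+i-1)!}{(k-1)!\,(n+i-1)!}$. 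Substituting, and using $(n-1)(n-2)!=(n-1)!$ together with $(k+i-1)!/(k-1)!=(k)_i$, yields $U_n(e_i,x)=x^n+\frac{(n-1)!}{(n+i-1)!}\sum_{k=1}^{n-1}(k)_i\,p_{n,k}(x)$. I would then absorb the boundary term: the $k=n$ rising factorial satisfies $\frac{(n-1)!}{(n+i-1)!}(n)_i=1$, so $x^n=p_{n,n}(x)$ is exactly the $k=n$ summand, while $(0)_i=0$ lets me also include $k=0$. Hence the lemma for $i\ge1$ reduces to showing that
\[
U_n(e_i,x)=\frac{(n-1)!}{(n+i-1)!}\,S,\qquad S:=\sum_{k=0}^{n}(k)_i\,p_{n,k}(x)=i!\sum_{j}\binom{i-1}{j}\binom{n}{i-j}x^{i-j},
\]
with the stated range of $j$.

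To compute $S$ I would use the identity $(k)_i\,u^{k-1}=\frac{d^i}{du^i}u^{k+i-1}$ (valid since $i$-fold differentiation of $u^{k+i-1}$ produces exactly the product $(k+i-1)(k+i-2)\cdots k=(k)_i$). Introducing an auxiliary variable $u$ while keeping $x$ fixed in the factor $(1-x)^{n-k}$, I form $G(u):=\sum_{k=0}^n\binom{n}{k}u^{k+i-1}(1-x)^{n-k}=u^{i-1}(u+1-x)^n$; differentiating $i$ times and evaluating at $u=x$ gives $\frac{d^i}{du^i}G(u)\big|_{u=x}=\sum_k\binom{n}{k}(k)_i\,x^{k-1}(1-x)^{n-k}=S/x$.

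Finally I would expand $\frac{d^i}{du^i}\big[u^{i-1}(u+1-x)^n\big]$ by the Leibniz rule. Since $u^{i-1}$ is a polynomial of degree $i-1$, only $0\le j\le i-1$ contribute, and since $(u+1-x)^n$ is killed after $n$ derivatives one needs $i-j\le n$, which produces the lower limit $j\ge\max\{0,i-n\}$; at $u=x$ the factor $(u+1-x)$ equals $1$. Collecting the surviving terms gives $S=\sum_{j}\binom{i}{j}\frac{(i-1)!}{(i-1-j)!}\frac{n!}{(n-i+j)!}x^{i-j}$, and a short factorial manipulation shows $\binom{i}{j}\frac{(i-1)!}{(i-1-j)!}\frac{n!}{(n-i+j)!}=i!\binom{i-1}{j}\binom{n}{i-j}$, which is precisely the claimed coefficient, finishing the case $i\ge1$. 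The case $i=0$ is immediate, since $e_0(0)=e_0(1)=1$ and $\int_0^1 p_{n-2,k-1}(t)\,dt=\frac{1}{n-1}$ give $U_n(e_0,x)=(1-x)^n+x^n+\sum_{k=1}^{n-1}p_{n,k}(x)=1$. I expect the only genuine obstacle to be the bookkeeping in this last paragraph, namely correctly pinning down the summation range and verifying the factorial identity for the coefficient; the Beta-integral evaluation and the Leibniz computation are otherwise routine.
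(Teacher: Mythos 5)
Your proof is correct and takes essentially the same route as the paper's (the paper omits details but states that its proof rests on exactly the two facts you use: the Beta-integral evaluation of $\int_0^1 p_{n,k}(t)\,t^i\,dt$ as a ratio of Pochhammer symbols, and the differentiation identity $(k)_i\,x^k = x\cdot\frac{d^i}{dy^i}y^{k+i-1}\big|_{y=x}$, which is precisely your generating-function/Leibniz computation). The only cosmetic difference is that you divide by $x$ rather than multiply, which is harmless since both sides are polynomials.
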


\begin{proof} The proof is standard and is based on the fact that, for any $i\geq 0$, $n\geq 0$ and $0\leq k\leq n$,
\[
\int_0^1 p_{n,k}(t) e_i(t) dt = {(k+1)_i \over (n+1)_{i+1}} \andd (k)_i \,x^k   =  x \cdot \left. {d^i \over dy^i} y^{k+i-1}\right|_{y=x}.
\]
We omit details.
%
%
\end{proof}

\begin{remark}
The following identity can also be used to calculate   $U_n(e_k, x)$:
 \[
 U_n(e_{k+1}, x) = {(n-k)x +2k \over n+k} U_n(e_k, x) - {k(k-1)(1-x) \over (n+k)(n+k-1)} U_n(e_{k-1}, x) .
 \]
 \end{remark}

\subsection{Gavrea's  operator}

In this  section, we discuss several properties of the operator $H_{n+2}$ that was introduced by Gavrea \cite{g}.
Everything here follows from \cites{g, cggkz}, and we include this  section in the current manuscript only for readers' convenience (we also somewhat clean up some of the proofs making them, in our opinion, more transparent by utilizing the notation \ineq{in:positive3} and \cor{keycor3}).

For any $n\in\N$ and a fixed (generating) polynomial $\gP_n(x) =\sum_{k=0}^n a_k x^k$, Gavrea's operator $H_{n+2}: C[0,1]\mapsto\Poly_{n+2}$ is defined as
\be \label{gavreaop}
  H_{n+2}(\gP_n; f, x):=    \sum_{k=0}^n\frac{a_k}{k+1}U_{k+2}(f, x).
\ee
Clearly, these operators are linear. It turns out that they are also positive and, moreover, preserve monotonicity of high orders if a generating polynomial $\gP$ satisfies certain properties (see \lem{mainspa}).

By \ineq{opun} we immediately get
\[
H_{n+2}(\gP_n; e_0, x) = \sum_{k=0}^n\frac{a_k}{k+1} = \int_0^1 \gP_n(t) dt ,
\]
\[
H_{n+2}(\gP_n; e_1, x) = \sum_{k=0}^n\frac{a_k}{k+1} x = x \int_0^1 \gP_n(t) dt
\]
and
\[
H_{n+2}(\gP_n; e_2, x) = \sum_{k=0}^n\frac{a_k}{k+1} \left(x^2 + \frac{2x(1-x)}{k+3} \right) .
\]
Hence,
\begin{eqnarray*}
 H_{n+2}(\gP_n; e_2, x) &-& x^2 \int_0^1 \gP_n(t) dt
 \;  = \;
x(1-x) \sum_{k=0}^n \left( \frac{a_k}{k+1} - \frac{a_k}{k+3}  \right)   \\
&  = &
x(1-x) \left( \int_0^1 \gP_n(t) dt  - \int_0^1 t^2 \gP_n(t) dt \right) .
\end{eqnarray*}

It was shown in \cite{g}*{Lemma 3} that, for all $0<x<1$ and $n\geq 2$,
\begin{eqnarray} \label{in:gav} \nonumber
\lefteqn{ U_n(f, x)  =  f(0)(1-x)^n+f(1)x^n }\\
& + &
 (n-1)(1-x)^n \int_0^x {D_{n-2}(f, y) \over (1-y)^n} dy +
 (n-1)x^n \int_x^1 {D_{n-2}(f, y) \over y^n} dy ,
\end{eqnarray}
where
\be \label{bd}
 D_n (f, x):= (n+1)\sum_{k=0}^{n} p_{n,k}(x) \int_0^1p_{n,k}(t)f(t)dt
\ee
is the (usual) Bernstein-Durrmeyer operator (see also \rem{rem32}).

Note that \ineq{in:gav} follows from the identity
\[
{ 1 \over n-1} p_{n, k+1}(x) = \int_0^x \left( \frac{1-x}{1-y} \right)^n p_{n-2, k}(y) dy
+ \int_x^1 \left( \frac{x}{y} \right)^n p_{n-2, k}(y) dy,
\]
which is valid for $0\leq k \leq n-2$ and is easily verified directly.

Now,  \cor{keycor3} yields
\begin{eqnarray*}
\lefteqn{ H_{n+2}(\gP_n; f, x)   =   f(0) \sum_{k=0}^n\frac{a_k}{k+1} (1-x)^{k+2} + f(1) \sum_{k=0}^n\frac{a_k}{k+1} x^{k+2} } \\
&& + \int_0^x \left({1-x \over 1-y}\right)^2 \sum_{k=0}^n a_k \left({1-x \over 1-y}\right)^k D_{k}(f, y) dy   \\
 && + \int_x^1 \left({x\over y}\right)^2 \sum_{k=0}^n a_k \left({x\over y}\right)^k D_{k}(f, y) dy \\
 &=&
 f(0) (1-x) \int_0^{1-x} \gP_n(y) dy + f(1) x \int_0^x \gP_n(y) dy \\
 && + \int_0^x \left({1-x \over 1-y}\right)^2 \left[ L_n^{\langle 0 \rangle} \left(\gP_n, {1-x \over 1- (\cdot)}, 1, 0, [0,x] \, ; f,y\right) \right.\\
 && \left. \mbox{\hspace{3cm}} +
 L_n^{\langle 0 \rangle} \left(\gP_n, {1-x \over 1- (\cdot)}, 0, 0, [0,x] \, ; f,y\right) \right] dy \\
 && +
 \int_x^1 \left({x\over y}\right)^2 \left[ L_n^{\langle 0 \rangle} \left(\gP_n, {x \over (\cdot)}, 1, 0, [x,1] \, ; f,y \right) \right. \\
 && \left. \mbox{\hspace{3cm}} +
  L_n^{\langle 0 \rangle} \left(\gP_n, {x \over (\cdot)}, 0, 0, [x,1] \, ; f,y \right) \right] dy ,
\end{eqnarray*}
 which implies that the operator $H_{n+2}$ is positive provided $\gP_n(x)\geq 0$ and $\gP_n'(x)\geq 0$ for all $x\in [0,1]$.

 Now, using the fact that $ \frac{d}{dx} U_{n+1}(f, x) = D_n(f', x)$ for any $n\in \N_0$  (the proof of this is straightforward or see \cite{cggkz}*{Theorem 12})
 by virtue of \lem{lem:deriv} (see also \rem{rem32}) we conclude that, for any $\nu\in\N$, $f\in C^{\nu}[0,1]$ and $k\geq \nu-2$,
\[
\frac{d^\nu}{dx^\nu} U_{k+2}(f, x)  = \frac{d^{\nu-1}}{dx^{\nu-1}} D_{k+1}^{\langle 0 \rangle}(f', x)
= {(k+1)! \over (k-\nu+2)! (k+3)_{\nu-1}} D^{\langle \nu-1 \rangle}_{k-\nu+2}\left( f^{(\nu)}, x \right) .
\]
Recalling that $U_{k+2}(f,\cdot) \in \Poly_{k+2}$, this implies, for $\nu\geq 2$,
\begin{eqnarray*}
\frac{d^\nu}{dx^\nu} H_{n+2}(\gP_n; f, x) & = & \sum_{k=\nu-2}^n\frac{a_k}{k+1} \cdot {(k+1)! \over (k-\nu+2)! (k+3)_{\nu-1}} D^{\langle \nu-1 \rangle}_{k-\nu+2}\left( f^{(\nu)}, x \right) \\
&=&
\sum_{k=0}^{n-\nu+2} {(k+\nu-2)! \over k! (k+\nu+1)_{\nu-1}} \, a_{k+\nu-2} D^{\langle \nu-1 \rangle}_{k}\left( f^{(\nu)}, x \right) \\
& = &
\frac{1}{(\nu)_\nu} \sum_{k=0}^{n-\nu+2} {(\nu)_k (k+1)_{\nu-2} \over (2\nu)_k}  (k+\nu) \, a_{k+\nu-2} D^{\langle \nu-1 \rangle}_{k}\left( f^{(\nu)}, x \right) .
\end{eqnarray*}
  Since $(k+\nu) (k+1)_{\nu-2} = (k)_{\nu-1}+ \nu (k+1)_{\nu-2}$, using \cor{keycor3} we write
\begin{eqnarray*}
\frac{d^\nu}{dx^\nu} H_{n+2}(\gP_n; f, x)  & = &
\frac{1}{(\nu)_\nu} \left[
\nu L_n^{\langle \nu-1 \rangle} (\gP_n, 1, \nu-2, \nu-2, [0,1] \, ; f^{(\nu)},x) \right. \\
&& \left. +
L_n^{\langle \nu-1 \rangle} (\gP_n, 1, \nu-1, \nu-2, [0,1] \, ; f^{(\nu)},x) \right] ,
\end{eqnarray*}
and conclude that $\frac{d^\nu}{dx^\nu} H_{n+2}(\gP_n; f, x) \geq 0$ provided $f^{(\nu)} (x)\geq 0$, $\gP_n^{(\nu-1)}(x)\geq 0$ and $\gP_n^{(\nu-2)}(x)\geq 0$ on $[0,1]$.

In the case $\nu=1$, we have
\begin{eqnarray*}
\frac{d}{dx} H_{n+2}(\gP_n; f, x) & = & \sum_{k=0}^n\frac{a_k}{k+1}  D^{\langle 0 \rangle}_{k+1}\left( f', x \right)
=
\sum_{k=1}^{n+1} {a_{k-1}\over k}  D^{\langle 0 \rangle}_{k}\left( f', x \right) \\
& = &
\sum_{k=0}^{n+1} {b_k\over k+1}  D^{\langle 0 \rangle}_{k}\left( f', x \right)
 =
L_{n+1}^{\langle 0 \rangle} (\widetilde\gP_{n+1}, 1, 0, 0, [0,1]; f', x) ,
\end{eqnarray*}
where $b_0 :=0$ and $b_k := (k+1)a_{k-1}/k$, $1\leq k \leq n+1$, and
\[
\widetilde\gP_{n+1}(x) := \sum_{k=0}^{n+1} b_k x^k  = x \gP_n(x)+ \int_0^x \gP_n(y) dy .
\]
\cor{keycor3} now implies that $\frac{d}{dx} H_{n+2}(\gP_n; f, x) \geq 0$ provided $f'(x)\geq 0$ and $\widetilde\gP_{n+1}(x) \geq 0$ on $[0,1]$ (and nonnegativity of $\gP_n$ on $[0,1]$ is clearly sufficient for the latter inequality).

We summarize the above   discussions in this section in the following lemma.

\begin{lemma}[\cite{g}*{Theorem 2} and \cite{cggkz}*{Theorem 14}]
 \label{mainspa}
Let $r, n\in\N$ and suppose that a generating polynomial $\gP_n\in\Poly_n$ is such that
\begin{enumerate}[(i)]
\item for all $0\leq \nu \leq r$, $\gP_n^{(\nu)} (x) \geq 0$, $x\in [0,1]$,
\item $\ds \int_0^1 \gP_n(t) dt = 1$.
\end{enumerate}
Then the operator $H_{n+2}: C[0,1]\mapsto\Poly_{n+2}$ defined in \ineq{gavreaop} has the following properties
\begin{enumerate}[(i)]
\item $H_{n+2}$ is a positive linear operator preserving linear functions, \ie
$H_{n+2}(\gP_n; g, \cdot) = g$ for any $g\in\Poly_1$,
\item $\ds H_{n+2}(\gP_n; e_2, x) = x^2 + x(1-x) \left( 1  - \int_0^1 t^2 \gP_n(t) dt \right)$,
\item For every $0\leq k \leq r+1$, $H_{n+2}$ is $k$-monotonicity preserving. In other words, if $f\in\Delta^{(k)}$, then
$H_{n+2}(\gP_n; f, \cdot)\in\Delta^{(k)}$.
\end{enumerate}

\end{lemma}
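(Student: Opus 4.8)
The plan is to assemble properties (i)--(iii) directly from the explicit computations carried out in the paragraphs preceding the statement, checking in each case that the hypotheses on $\gP_n$ are exactly what those computations require. For (i), linearity is immediate from \ineq{gavreaop}; the already computed values $H_{n+2}(\gP_n; e_0, x) = \int_0^1 \gP_n(t)\,dt$ and $H_{n+2}(\gP_n; e_1, x) = x\int_0^1 \gP_n(t)\,dt$ become $1$ and $x$ once hypothesis (ii) is invoked, so $e_0$ and $e_1$, and hence all of $\Poly_1$, are preserved; and positivity was shown, via the representation obtained from \cor{keycor3}, to hold whenever $\gP_n \geq 0$ and $\gP_n' \geq 0$ on $[0,1]$, which are the $\nu = 0$ and $\nu = 1$ cases of hypothesis (i) (available since $r \geq 1$). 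Property (ii) is then a one-line substitution into the identity $H_{n+2}(\gP_n; e_2, x) - x^2\int_0^1 \gP_n = x(1-x)\bigl(\int_0^1 \gP_n - \int_0^1 t^2 \gP_n\bigr)$ derived above, using $\int_0^1 \gP_n = 1$.

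For (iii) I would first note that, since $H_{n+2}(\gP_n; f, \cdot)$ is always a polynomial, it belongs to $\Delta^{(k)}$ precisely when its $k$-th derivative is nonnegative on $[0,1]$ (for $k = 0$ this means the image is nonnegative, for $k = 1$ that it is nondecreasing). The derivative formulas established above show, under the stated sign conditions on the derivatives of $\gP_n$, that $\frac{d^\nu}{dx^\nu} H_{n+2}(\gP_n; f, x) \geq 0$ on $[0,1]$ whenever $f$ is smooth with $f^{(\nu)} \geq 0$: the case $\nu = 0$ is positivity, $\nu = 1$ needs only $\gP_n \geq 0$, and $\nu \geq 2$ needs $\gP_n^{(\nu-1)} \geq 0$ and $\gP_n^{(\nu-2)} \geq 0$. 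Taking $\nu = k$, all of these are supplied by hypothesis (i) exactly when $k - 1 \leq r$, which is the source of the range $0 \leq k \leq r+1$. Hence $\Delta^{(k)}$ is preserved for every smooth $f \in \Delta^{(k)}$.

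The remaining and most delicate step is to remove the smoothness assumption, since $\Delta^{(k)}$ consists only of continuous functions. Here I would approximate $f \in \Delta^{(k)}$ uniformly by $C^\infty$ functions $f_m \in \Delta^{(k)}$: extend $f$ to a slightly larger interval preserving $k$-monotonicity and mollify by a nonnegative kernel, so that each symmetric difference $\Delta_h^k(f_m, \cdot)$, being an average of the nonnegative $\Delta_h^k(f, \cdot)$, stays nonnegative and $f_m$ is smooth, whence $f_m^{(k)} \geq 0$. The smooth case gives $H_{n+2}(\gP_n; f_m, \cdot) \in \Delta^{(k)}$; since $H_{n+2}$ preserves constants and is positive it is a contraction, so $H_{n+2}(\gP_n; f_m, \cdot) \to H_{n+2}(\gP_n; f, \cdot)$ uniformly; and $\Delta^{(k)}$ is closed under uniform limits because each map $f \mapsto \Delta_h^k(f, x)$ is continuous. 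I expect this density/closure argument to be the main obstacle, chiefly because constructing the smooth $k$-monotone approximant requires a careful $k$-monotone extension across the endpoints and attention to the endpoint conventions in the definition of $\Delta_h^k$, so that mollification does not destroy monotonicity near $0$ and $1$.
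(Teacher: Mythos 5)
Your handling of (i), (ii), and of (iii) for smooth $f$ coincides with the paper's own argument: the lemma there is explicitly a summary of the computations preceding it (the moments $H_{n+2}(\gP_n;e_i,\cdot)$, the positivity criterion $\gP_n\geq 0$, $\gP_n'\geq 0$ obtained via \ineq{in:gav} and \cor{keycor3}, and the formulas for $\frac{d^\nu}{dx^\nu}H_{n+2}(\gP_n;f,x)$ requiring $\gP_n^{(\nu-1)}\geq 0$ and $\gP_n^{(\nu-2)}\geq 0$ when $\nu\geq 2$, and $\widetilde\gP_{n+1}\geq0$ when $\nu=1$), and you invoke exactly the hypotheses these computations need, getting the correct range $0\leq k\leq r+1$. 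Where you go beyond the paper is the density step: the paper's derivative formulas are stated only for $f\in C^\nu[0,1]$, and the passage to arbitrary continuous $f\in\Delta^{(k)}$ is left implicit (effectively delegated to the cited results of Gavrea and of Cottin et al.). Your scheme for closing that gap --- approximate $f$ uniformly by smooth members of $\Delta^{(k)}$, use that $H_{n+2}$ is a contraction (positivity plus preservation of constants), and use that $\Delta^{(k)}$ is closed under uniform limits --- is the right one, and the contraction and closedness parts are correct as you state them.

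The one step that fails as written is the construction of the smooth approximants: a continuous $k$-monotone function on $[0,1]$ need \emph{not} admit any $k$-monotone extension to a larger interval, so ``extend, then mollify'' cannot always be executed. For $k=2$ take $f(x)=-\sqrt{x}$ (or $-\varphi(x)$): it is convex and continuous on $[0,1]$, but its right derivative at $0$ is $-\infty$, so no convex extension past $0$ exists. Two standard repairs make your argument complete. First, replace extension by an affine contraction of the variable: $f_\eps(x):=f\left(\eps+(1-2\eps)x\right)$ is well defined and $k$-monotone on a full neighbourhood of $[0,1]$ (since $\eps+(1-2\eps)x\in[0,1]$ there), converges to $f$ uniformly by uniform continuity, and can then be mollified exactly as you propose, the mollification preserving the sign of the $k$-th differences. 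Second, and more simply, take the Bernstein polynomials $B_m(f)$ as the smooth approximants: they converge to $f$ uniformly and lie in $\Delta^{(k)}$ whenever $f$ does, because $B_m(f)^{(k)}$ is a positive combination of $k$-th finite differences of $f$ at the nodes, and nonnegativity of the symmetric differences in the definition of $\Delta^{(k)}$ is the same as nonnegativity of forward differences. With either repair, your proof is correct and in fact supplies a step the paper glosses over.
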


\subsection{A particular generating polynomial}

Let $T_m(x):=\cos m\arccos x$, $x\in[-1,1]$,
be the Chebyshev polynomial of degree $m$, $\tilde x=\cos(\pi/2m)$ be its  rightmost  zero, $ x_1=\cos(\pi/m)$
be its rightmost local minimum, $I_1:=[x_1,1]$ (its length $|I_1|=1-x_1=2\sin^2(\pi/2m)$). Then
$$
\tau_m(x):=\frac{T_m(x)}{x-\tilde x}|I_1|,
$$
is a polynomial of degree $m-1$. It is well known (see, \eg \cite{kls}*{Appendix A}) and is not difficult to check, that
\be\label{kls}
\frac43<\tau_m(x)<4,\quad x\in I_1.
\ee

Also note that  since $|I_1|< 2 (\tilde x-x_1)$, we have
\be\label{kls1}
|\tau_m(x)|\le\frac{|I_1|}{\tilde x-x}< \frac{2 |I_1|}{1-x},\quad x\in[-1,1]\setminus I_1.
\ee

\begin{lemma}  \label{lem210}
For each $r\in\N$ and $n\in\N_0$,  there exists a polynomial $P_n$ of degree $\le n$ such that,
for every $0\leq \nu\leq r $,
\be\label{mon}
 P_n^{(\nu)}(x)\ge0, \quad x\in[0,1],
\ee
\be\label{1}
\int_0^1P_n(x)dx=1,
\ee
and
\be\label{small}
1-\int_0^1 x^\mu P_n(x)dx\le\frac c{n^2}, \quad \mu\in\N ,
\ee
where $c$ is a   constant that depends only on $r$ and $\mu$.
\end{lemma}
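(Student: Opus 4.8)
The plan is to construct $P_n$ explicitly from the polynomial $\tau_m$ introduced just before the statement, raising it to a suitable even power and integrating it repeatedly. The guiding intuition is that $\tau_m^2$ is a nonnegative polynomial of degree $2(m-1)$ that is large (of size $\sim 1$) only on the small interval $I_1$ near $x=1$ of length $|I_1|=2\sin^2(\pi/2m)\sim \pi^2/(2m^2)$, and decays like $|I_1|^2/(1-x)^2$ away from $I_1$ by \ineq{kls1}. Thus $\tau_m^2$ behaves like an approximate identity concentrated near $1$ with ``width'' of order $m^{-2}$, and a high power $\tau_m^{2s}$ concentrates even more sharply. First I would take a function of the form $\Phi(x):=\tau_m^{2s}(x)$ (or a constant multiple), where $s=s(r)$ is chosen large enough that $\Phi$ has $r$ vanishing derivatives at $x=0$ in the appropriate sense, and then I would define $P_n$ by integrating $\Phi$ a total of $r$ times starting from $0$, i.e. set $P_n$ to be (a normalization of) the $r$-fold antiderivative $\int_0^x\!\int_0^{x_{r}}\!\cdots\int_0^{x_2}\Phi(x_1)\,dx_1\cdots dx_r$. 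Integrating an everywhere-nonnegative function $r$ times from the left endpoint automatically forces $P_n^{(\nu)}(x)=\int_0^x\cdots\Phi\ge 0$ for all $0\le\nu\le r$ on $[0,1]$, which is precisely \ineq{mon}; so the monotonicity requirement is essentially free once the base function is nonnegative. The degree bookkeeping is $\deg P_n=2s(m-1)+r$, and I would pick $m$ as a linear function of $n$ (with $s,r$ fixed) so that $\deg P_n\le n$.

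Next I would enforce the normalization \ineq{1} simply by dividing $P_n$ by its integral $\int_0^1 P_n$, which is strictly positive; since scaling does not affect nonnegativity of derivatives, \ineq{mon} survives. The substance of the proof is the moment estimate \ineq{small}. The key point is that, after normalization, $P_n$ is a probability density on $[0,1]$ concentrated near $x=1$, so $1-\int_0^1 x^\mu P_n$ measures how far the mass sits from the right endpoint. I would write $1-\int_0^1 x^\mu P_n(x)\,dx=\int_0^1(1-x^\mu)P_n(x)\,dx$ and bound $1-x^\mu\le \mu(1-x)$ on $[0,1]$, reducing everything to showing $\int_0^1(1-x)P_n(x)\,dx\le c/n^2$. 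Via integration by parts (moving the $r$ integrations off of $\Phi$ and onto the weight $1-x$, or directly estimating the iterated integrals) this amounts to controlling weighted integrals of the concentrated function $\Phi=\tau_m^{2s}$ of the form $\int_0^1 (1-x)\,\tau_m^{2s}(x)\,dx$ against $\int_0^1\tau_m^{2s}(x)\,dx$, and the claim is that the former is smaller by a factor of order $m^{-2}\sim n^{-2}$, which is exactly the width scale of the concentration.

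The main obstacle, then, is the quantitative estimate that the normalized mass of $\tau_m^{2s}$ lies within distance $O(m^{-2})$ of $x=1$, i.e. proving the two-sided control
\[
\int_0^1 (1-x)\,\tau_m^{2s}(x)\,dx \;\le\; \frac{c}{m^2}\int_0^1 \tau_m^{2s}(x)\,dx.
\]
For the numerator I would split $[0,1]=I_1\cup([0,1]\setminus I_1)$: on $I_1$ use $|I_1|\sim m^{-2}$ together with the uniform bound $\tau_m<4$ from \ineq{kls} to get a contribution of order $m^{-2}\cdot m^{-2}$ against a length-$m^{-2}$ interval; off $I_1$ use the decay \ineq{kls1}, $|\tau_m(x)|< 2|I_1|/(1-x)$, so that $(1-x)\tau_m^{2s}(x)\le (1-x)\,(2|I_1|)^{2s}(1-x)^{-2s}$, and since $s\ge 1$ this integrand is integrable near $1$ and the whole off-$I_1$ integral is controlled by a power of $|I_1|\sim m^{-2}$. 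For the denominator I would use the lower bound $\tau_m>4/3$ on $I_1$ from \ineq{kls} to get $\int_0^1\tau_m^{2s}\ge \int_{I_1}\tau_m^{2s}\ge (4/3)^{2s}|I_1|\sim m^{-2}$ from below, so that the ratio is genuinely $O(m^{-2})$. The only delicate point is choosing $s$ large enough (depending on $r$, not on $\mu$) that all the tail integrals off $I_1$ converge and scale correctly; once $s$ is fixed by $r$, the constant $c$ in \ineq{small} absorbs $s$, $r$, and the factor $\mu$, giving the stated dependence of $c$ on $r$ and $\mu$ alone.
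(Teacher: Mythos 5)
Your proposal is correct and is essentially the paper's own proof: the paper sets $P_n(x)=\lambda_n\int_0^x(x-t)^{r-1}\tau_m^{4r}(t)\,dt$ with $m=\lceil n/(8r)\rceil$, which is precisely your normalized $r$-fold antiderivative of an even power of $\tau_m$ (so \ineq{mon} comes for free), and it proves \ineq{small} by the same splitting of $[0,1]$ into $I_1$ and its complement using \ineq{kls} and \ineq{kls1}. The only bookkeeping slip: exchanging the order of integration shows that, after normalization, $\int_0^1(1-x)P_n(x)\,dx$ is a constant multiple of the ratio $\int_0^1(1-t)^{r+1}\tau_m^{2s}(t)\,dt\big/\int_0^1(1-t)^{r}\tau_m^{2s}(t)\,dt$ (the $r$-fold integration deposits the weight $(1-t)^r$ in both numerator and denominator), not your ratio with weights $1-t$ and $1$; your splitting argument bounds the corrected ratio by $c|I_1|\le c/n^2$ verbatim, and this is precisely why the tail-convergence condition reads $2s>r+2$ (the paper takes $2s=4r$), consistent with your own remark that $s$ must be chosen large depending on $r$.
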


We remark that the estimate \ineq{small} cannot be improved. An indirect proof of this fact is that if we could improve it for $\mu=2$ and some  polynomial $P_n$ satisfying \ineq{mon} and \ineq{1}, then a well known Korovkin's result on approximation by positive linear operators would be violated by $H_{n+2}(P_n; f, \cdot)$ (since we would have $H_{n+2}(P_n; e_i, x) = o(n^{-2})$ for $i=0,1,2$). One can also easily prove this fact directly. Indeed, let $P_n$ be an arbitrary polynomial for $\Poly_n$ such that $P_n(x)\geq 0$, $x\in [0,1]$, and \ineq{1} is satisfied.
Then, for any $\mu\geq 1$,
\begin{eqnarray*}
1-\int_0^1 x^\mu P_n(x)dx & = & \int_0^1 (1- x^\mu)  P_n(x)dx \geq \int_0^1 (1- x)  P_n(x)dx \\
&\geq & \int_0^{1-n^{-2}} (1- x)  P_n(x)dx \geq n^{-2} \int_0^{1-n^{-2}}  P_n(x)dx \\
& \geq & c n^{-2} \int_0^{1}  P_n(x)dx = c n^{-2} ,
\end{eqnarray*}
 where the last inequality follows from a well known Remez inequality for algebraic polynomials in $L_1$ (see, \eg \cite{be}*{Theorem A.4.10}).

\begin{proof}[Proof of \lem{lem210}]  Clearly, it is enough to prove this lemma for $n> 8 r$.
Let $Q_{n-r}$ be a nonnegative (on $[0,1]$) polynomial of degree $\leq n-r$, and define
\[
P_n(x):=\lambda_n\int_0^x(x-t)^{r-1}Q_{n-r}(t)dt.
\]
The polynomial $P_n$ satisfies \ineq{mon} and  since
\be\label{pnn}
\int_0^1P_n(x)dx=\lambda_n\int_0^1\int_0^x(x-t)^{r-1}Q_{n-r}(t)dtdx=\frac{\lambda_n}r\int_0^1(1-t)^rQ_{n-r}(t)dt,
\ee
  in order for \ineq{1} to hold, we need to take
\be\label{lambda}
\lambda_n:= r \left( \int_0^1(1-t)^rQ_{n-r}(t)dt \right)^{-1}.
\ee
Now,
\begin{eqnarray*}
\int_0^1 x^\mu P_n(x)dx &=& \lambda_n\int_0^1 x^\mu \int_0^x(x-t)^{r-1}Q_{n-r}(t)dtdx\\
&=& \lambda_n\int_0^1Q_{n-r}(t)\int_t^1 x^\mu (x-t)^{r-1}dxdt .
\end{eqnarray*}
Since
\begin{eqnarray*}
\int_t^1 x^\mu (x-t)^{r-1}dx & = & \int_t^1 \sum_{i=0}^\mu {\mu \choose i} (x-t)^{i+r-1} t^{\mu - i} \, dx \\
& = &
(1-t)^r \sum_{i=0}^\mu {\mu \choose i}  \frac{1}{i+r}t^{\mu - i} (1-t)^{i} ,
\end{eqnarray*}
it follows that
\[
\int_0^1 x^\mu P_n(x)dx = \frac{\lambda_n}r \int_0^1 Q_{n-r}(t)(1-t)^r \left[   \sum_{i=0}^\mu {\mu \choose i}  \frac{r}{i+r}t^{\mu - i} (1-t)^{i} \right] dt.
\]

Combining this with \ineq{pnn}  we have
\begin{eqnarray}\label{min} \nonumber
\lefteqn{ 1- \int_0^1 x^\mu P_n(x)dx} \\ \nonumber
&= & \int_0^1P_n(x)dx-\int_0^1 x^\mu P_n(x)dx\\ \nonumber
& = &
\frac{\lambda_n}r \int_0^1 Q_{n-r}(t)(1-t)^r \left[1-  \sum_{i=0}^\mu {\mu \choose i}  \frac{r}{i+r}t^{\mu - i} (1-t)^{i} \right] dt \\ \nonumber
& = &
\frac{\lambda_n}r \int_0^1 Q_{n-r}(t)(1-t)^r \left[ \sum_{i=0}^\mu {\mu \choose i}  \frac{i}{i+r}t^{\mu - i} (1-t)^{i} \right] dt \\ \nonumber
& =  &
\frac{\lambda_n}r \int_0^1 Q_{n-r}(t)(1-t)^{r+1}  \left[ \sum_{i=1}^\mu {\mu \choose i}  \frac{i}{i+r}t^{\mu - i} (1-t)^{i-1} \right] dt \\
&\le& c \lambda_n \int_0^1Q_{n-r}(t)(1-t)^{r+1}dt.
\end{eqnarray}

We now let
\[
m:= \left\lceil\frac n{8r}\right\rceil \andd Q_{n-r}(x):=\tau_m^{4r}(x).
\]
Then, $Q_{n-r}$ is a nonnegative polynomial and its  degree $\leq  4r(m-1)  \le n-r$.
Using \ineq{kls} and \ineq{kls1} we   have
\begin{eqnarray*}
  1-\int_0^1 x^\mu P_n(x)dx
&\le & c\lambda_n \left(\int_0^{x_1}+\int_{I_1}\right)Q_{n-r}(t)(1-t)^{r+1}dt\\
&\le &
c \lambda_n  |I_1|^{4r}\int_{-\infty}^{x_1}(1-t)^{1-3r}dt+  c \lambda_n   \int_{I_1}(1-t)^{r+1}dt\\
&\leq  & c \lambda_n|I_1|^{r+2} .
\end{eqnarray*}
Finally, recalling \ineq{lambda} we write
\begin{eqnarray*}
 1-\int_0^1x^\mu P_n(x)dx  & \leq & c  |I_1|^{r+2} \left( \int_0^1(1-t)^rQ_{n-r}(t)dt \right)^{-1} \\
&\le & c     |I_1|^{r+2} \left( \int_{I_1}(1-t)^rQ_{n-r}(t)dt \right)^{-1} \\
&\le & c   |I_1|^{r+2} \left( \int_{I_1}(1-t)^rdt \right)^{-1} \\
& \leq & c  |I_1|
 \leq  \frac c{n^2},
\end{eqnarray*}
and the proof of \ineq{small} is complete.
\end{proof}

\subsection{Proof of \thm{mainth}}\label{proofmain}

 Suppose   that $q\in\N_0$   and let
 $\gP_{n-2} := P_{n-2}$ where $P_n$ is the polynomial from the statement of \lem{lem210} with $r:= \max\{q-1, 1\}$. In particular, \ineq{small} with $\mu:=2$ implies
 that
 \[
 1 - \int_0^1 x^2 \gP_{n-2} (x) dx \leq {c_1 \over n^2}, \quad n\geq 3,
 \]
with the constant $c_1$ depending only on $q$.
Also, let $n_0 :=   2 \left\lceil c_1^{1/2}  \right\rceil \in \N$.

 For   $1\leq n < n_0$, we can define $M_n(f, x) := (1-x)f(0)+x f(1)$. Clearly, $M_n: C[0,1]\mapsto \Poly_1 \subset \Poly_n$ is a positive linear polynomial operator preserving linear functions as well as $k$-monotonicity for all $k$. Since $M_n(e_2, x) = x = x^2 + \varphi^2(x)$, \cor{maincor} (with $\a_n=1$) implies that
\[
|f(x)-M_n(f, x)| \leq c   \left(  1 +   \frac{ \varphi^{2-\lambda}(x)}{h^2 (\varphi(x)+1/4)^\lambda} \right) \omega_2^{\varphi^\lambda} (f, h),
\]
and the statement of \thm{mainth} follows.

 Suppose now that
  $n\geq n_0$ is fixed, and define $M_n(f, \cdot) := H_{n}(\gP_{n-2}; f, \cdot)$.
It follows from Lemmas~\ref{mainspa} and \ref{lem210} that $M_n: C[0,1]\mapsto\Poly_n$ is a positive linear operator preserving linear functions as well as $k$-monotonicity for all $0\leq k\leq q$, and $M_n(e_2, x) - x^2 = \a_n \varphi^2(x)$ with
\[
\a_n = 1 - \int_0^1 t^2 \gP_{n-2}(t) dt   \leq   \frac{c_1}{n^2} \leq  \frac{1}{4} .
\]
 Therefore, taking into account that
 the function $\Lambda(t):= t\left( \varphi(x)+ \sqrt{t}\right)^{-\lambda}$ is increasing for $t\in [0, \infty)$ if $0\leq \lambda < 2$,
 \cor{maincor} yields, for  $0<h\leq c_0$,
\begin{eqnarray*}
|f(x)-M_n(f, x)| &\leq&
  c  \left(1+   \frac{ \a_n \varphi^{2-\lambda}(x) }{h^2  \left(\varphi(x)+  \sqrt{\a_n}\right)^\lambda    } \right) \omega_2^{\varphi^\lambda} (f, h) \\
 &\leq&
 c  \left(1+   \frac{ c_1 \varphi^{2-\lambda}(x) }{h^2 n^2 \left(\varphi(x)+  \sqrt{c_1/n^2 }\right)^\lambda    } \right) \omega_2^{\varphi^\lambda} (f, h),
\end{eqnarray*}
which implies \ineq{maines}.

\sect{Appendix: Bernstein-Durrmeyer-Lupa\c{s} polynomials with ultraspherical weights}\label{append}

The main results in this paper (as well as all results from \cite{cggkz} and \cite{g} that we need) greatly depend on (in our opinion, a rather interesting) paper by A. Lupa\c{s} \cite{lup} which does not seem to be readily available. Hence, in this section, we state and provide alternative elementary proofs for all theorems from \cite{lup} that we use.

For $\alpha >-1$, let
\be \label{defphi}
\phi_n^{(\alpha)}(x)  := {(-1)^n \over (\alpha+1)_n} x^{-\alpha} (1-x)^{-\alpha} {d^n \over dx^n} \left\{ x^{n+\alpha} (1-x)^{n+\alpha} \right\}
\ee
be the (shifted) ultraspherical polynomials on $[0,1]$ (normalized so that $\phi_n^{(\alpha)}(1)=1$). Note that
\[
\phi_n^{(\alpha)}(x) = {P_n^{(\a+1/2)}(2x-1) \over P_n^{(\a+1/2)}(1)} ,
\]
where $P_n^{(\lambda)}$
is the classical ultraspherical (Gegenbauer)  polynomial  (see \cite{szego}*{Chapter IV}).
 Recall that
\[
P_n^{(\lambda)}(1) = {\binom{n+2\lambda-1}{n}} = {(2\lambda)_n \over n!} .
\]

\begin{remark}
With  $\phi_0^{(\alpha)}(x)=1$ and $\phi_1^{(\alpha)}(x)=2x-1$,
the following recurrence equation is valid (see \cite{szego}*{(4.7.17)}:
\be \label{recur}
(n+2\a) \phi_n^{(\alpha)}(x) = (2n+2\a-1)(2x-1) \phi_{n-1}^{(\alpha)}(x) - (n-1) \phi_{n-2}^{(\alpha)}(x) , \quad n\geq 2.
\ee
In particular, this implies that, if $\phi_n^{(\alpha)}(x) = \lambda_n^{(\alpha)} x^n + p_{n-1}(x)$ with $p_{n-1}\in \Poly_{n-1}$, then
\be \label{highest}
\lambda_n^{(\alpha)} := {4^n (\a+1/2)_n \over (2\a+1)_n } = {(2\a+n+1)_n \over (\a+1)_n}
\ee
(see also \cite{szego}*{(4.7.9)}).
\end{remark}

Bernstein-Durrmeyer-Lupa\c{s} polynomials with ultraspherical weights are defined as
\be \label{dalpha}
\dal_n (f, x) := \sum_{k=0}^n p_{n,k}(x) \frac{\langle p_{n,k}, f \rangle}{\langle p_{n,k}, 1 \rangle} ,
\ee
 where
 \[
 \langle f, g \rangle :=   \int_0^1 f(t) g(t) d w(t, \alpha) , \quad d w(t, \alpha) := \frac{t^\alpha (1-t)^\alpha}{B(\alpha+1, \alpha+1)} dt ,
 \]
and $B(x,y) := \int_0^1 t^{x-1} (1-t)^{y-1} dt$ is the beta function.
Note that
\begin{eqnarray*}
\langle p_{n,k}, 1 \rangle &=& \frac{1}{B(\alpha+1, \alpha+1)} {\binom nk} \int_0^1   t^{\alpha+k} (1-t)^{\alpha+n-k}  dt \\
&=&  \frac{B(\alpha+k+1, \alpha+n-k+1)}{B(\alpha+1, \alpha+1)}{\binom nk} \\
& = & {\binom nk} \frac{(\alpha+1)_k (\alpha+1)_{n-k}}{(2\alpha+2)_n} ,
\end{eqnarray*}
where  we used the fact that  $B(x,y)=\Gamma(x)\Gamma(y)/\Gamma(x+y)$, where $\Gamma(x) := \int_0^\infty t^{x-1} e^{-t} dt$ is the gamma function, and $\Gamma(x+1)=x\Gamma(x)$, $x>0$.

\begin{remark} \label{rem32}
If $\a=0$, then
$D^{\langle 0 \rangle}(f,x) =  D_n (f, x)$, where $D_n$
is the (usual) Bernstein-Durrmeyer operator defined in \ineq{bd}.
\end{remark}

\begin{lemma}[\cite{lup}*{(1.3) and (3.2)}]
For any $\alpha >-1$,
\be \label{phiber}
\phi_n^{(\alpha)}(x) =  (\alpha+1)_n \sum_{k=0}^n {(-1)^{n-k}  \over (\alpha+1)_k (\alpha+1)_{n-k}}      p_{n,k} \left(x\right) .
\ee
and, for $t\neq 1-x$,
\be \label{lup2}
(x+t-1)^n \phi_n^{(\alpha)}\left( {xt \over x+t-1} \right) =
(\a+1)_n  \sum_{k=0}^n {p_{n,k}(x) p_{n,k}(t) \over {\binom nk} (\alpha+1)_k (\alpha+1)_{n-k}} .
\ee
\end{lemma}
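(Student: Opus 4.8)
The plan is to derive \ineq{phiber} directly from the Rodrigues-type definition \ineq{defphi} by an explicit Leibniz expansion, and then to obtain \ineq{lup2} as essentially a one-line substitution into \ineq{phiber}.

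First I would prove \ineq{phiber}. Applying the Leibniz rule to the $n$-th derivative in \ineq{defphi} gives
\[
\frac{d^n}{dx^n}\left\{x^{n+\alpha}(1-x)^{n+\alpha}\right\} = \sum_{k=0}^n \binom{n}{k}\frac{d^k}{dx^k}\left[x^{n+\alpha}\right]\frac{d^{n-k}}{dx^{n-k}}\left[(1-x)^{n+\alpha}\right],
\]
where $\frac{d^k}{dx^k}[x^{n+\alpha}]=\frac{\Gamma(n+\alpha+1)}{\Gamma(n+\alpha-k+1)}x^{n+\alpha-k}$ and $\frac{d^{n-k}}{dx^{n-k}}[(1-x)^{n+\alpha}]=(-1)^{n-k}\frac{\Gamma(n+\alpha+1)}{\Gamma(\alpha+k+1)}(1-x)^{\alpha+k}$. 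Multiplying by the prefactor $\frac{(-1)^n}{(\alpha+1)_n}x^{-\alpha}(1-x)^{-\alpha}$ cancels the factors $x^{\alpha}$ and $(1-x)^{\alpha}$ and leaves a sum of terms proportional to $x^{n-k}(1-x)^{k}$.

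Next I would convert the Gamma ratios into Pochhammer symbols using $\Gamma(\alpha+1+j)=(\alpha+1)_j\,\Gamma(\alpha+1)$, which yields $\frac{\Gamma(n+\alpha+1)}{\Gamma(\alpha+j+1)}=\frac{(\alpha+1)_n}{(\alpha+1)_j}$, and then reindex by $j:=n-k$ so that the monomials become $x^{j}(1-x)^{n-j}$ and the binomial coefficient becomes $\binom{n}{j}$. Collecting the two resulting factors of $(\alpha+1)_n$ and checking the sign (here $(-1)^n(-1)^{n-k}=(-1)^{j}=(-1)^{n-j}$ since $k=n-j$) produces exactly $(\alpha+1)_n\sum_{j=0}^n\frac{(-1)^{n-j}}{(\alpha+1)_j(\alpha+1)_{n-j}}\,p_{n,j}(x)$, which is \ineq{phiber}.

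For \ineq{lup2}, I would substitute $s:=\frac{xt}{x+t-1}$ into \ineq{phiber}. The key algebraic observation is the factorization $x+t-1-xt=-(1-x)(1-t)$, whence $1-s=\frac{-(1-x)(1-t)}{x+t-1}$ and therefore $s^{k}(1-s)^{n-k}=(-1)^{n-k}\frac{x^{k}t^{k}(1-x)^{n-k}(1-t)^{n-k}}{(x+t-1)^{n}}$. Multiplying through by $(x+t-1)^{n}$ clears every power of $x+t-1$, while the factor $(-1)^{n-k}$ coming from $(1-s)^{n-k}$ cancels the $(-1)^{n-k}$ already present in \ineq{phiber}. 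Recognizing $\binom{n}{k}x^{k}t^{k}(1-x)^{n-k}(1-t)^{n-k}=p_{n,k}(x)p_{n,k}(t)/\binom{n}{k}$ then gives \ineq{lup2} at once. The only delicate part of the whole argument is the bookkeeping in the second paragraph — tracking the Gamma-to-Pochhammer conversions, the reindexing, and the signs; once \ineq{phiber} is in hand, \ineq{lup2} follows immediately and its sole nontrivial ingredient is the factorization above.
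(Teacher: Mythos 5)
Your proposal is correct and follows essentially the same route as the paper's own proof: a Leibniz expansion of the Rodrigues-type formula \ineq{defphi} to get \ineq{phiber}, and then the substitution $s=xt/(x+t-1)$ together with the factorization $x+t-1-xt=-(1-x)(1-t)$ to get \ineq{lup2} (the paper packages this as the identity $p_{n,k}(x)p_{n,k}(t)=(-1)^{n-k}\binom{n}{k}(x+t-1)^n p_{n,k}(s)$, which is exactly your computation of $s^k(1-s)^{n-k}$). One cosmetic slip: in your parenthetical sign check the intermediate term should be $(-1)^{k}$ rather than $(-1)^{j}$, but your final sign $(-1)^{n-j}$ is the correct one, so nothing is affected.
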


Note that  \ineq{lup2} corrects a misprint in \cite{lup}*{(3.2)}. Also, we remark that taking the limit in \ineq{lup2} as $t \to 1-x$ we get the identity
\[
\lambda_n^{(\alpha)} = (\a+1)_n  \sum_{k=0}^n {\binom nk} {1 \over  (\alpha+1)_k (\alpha+1)_{n-k}} .
\]

\begin{proof} First of all,
\begin{eqnarray*}
\lefteqn{ {d^n \over dx^n} \left\{ x^{n+\alpha} (1-x)^{n+\alpha} \right\} }\\
& = &
 \sum_{k=0}^n {\binom nk} {d^{n-k} \over dx^{n-k}} x^{n+\alpha} {d^{k} \over dx^{k}} (1-x)^{n+\alpha} \\
 & = &
 \sum_{k=0}^n {\binom nk} {(\alpha+1)_n \over (\alpha+1)_k} x^{\alpha+k} {(\alpha+1)_n \over (\alpha+1)_{n-k}} (-1)^{k}  (1-x)^{n+\alpha-k} \\
  & = &
  [(\alpha+1)_n]^2 x^{\alpha}(1-x)^{ \alpha} \sum_{k=0}^n   {(-1)^{k} \over (\alpha+1)_k (\alpha+1)_{n-k}}   p_{n,k} \left(x\right) ,
 \end{eqnarray*}
 which together with \ineq{defphi} implies \ineq{phiber}.

 Now, since
\[
p_{n,k}(x) p_{n,k}(t) = (-1)^{n-k} {\binom nk}  (x+t-1)^n p_{n,k} \left( {xt \over x+t-1} \right) ,
\]
using \ineq{phiber} we have
\begin{eqnarray*}
\lefteqn{  \sum_{k=0}^n {p_{n,k}(x) p_{n,k}(t) \over {\binom nk} (\alpha+1)_k (\alpha+1)_{n-k}} }\\
& = &
(x+t-1)^n
\sum_{k=0}^n {(-1)^{n-k} \over  (\alpha+1)_k (\alpha+1)_{n-k}}  p_{n,k} \left( {xt \over x+t-1} \right)    \\
& = &
{1 \over (\a+1)_n } (x+t-1)^n \phi_n^{(\alpha)}\left( {xt \over x+t-1} \right) ,
\end{eqnarray*}
which is \ineq{lup2}.
\end{proof}

\begin{theorem}[\cite{lup}*{Theorem 4.1}] \label{th33}
For any $\a>-1/2$, $f\in  C[0,1]$ and $x\in [0,1]$,
\[
\dal_n(f, x) = \frac{(2\alpha+2)_n}{(\alpha+1)_n}
\int_0^1 f(t) \int_0^1 \left[\Theta (x,t,u)\right]^n   \, d w(u,\alpha-1/2)\, dw(t,\alpha) ,
\]
where $\Theta (x,t,u) := (1-u) a(x,t)+ u b(x,t)$ with
\[
 a(x,t) := \left( \sqrt{xt}-\sqrt{(1-x)(1-t)} \right)^2 \andd
 b(x,t) := \left( \sqrt{xt}+\sqrt{(1-x)(1-t)} \right)^2 .
\]
\end{theorem}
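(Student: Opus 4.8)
The plan is to strip the operator $\dal_n$ down to a single integral against $f$ whose kernel is an explicit polynomial, and then to recognize that kernel as the claimed $u$-integral of $\Theta^n$. First I would insert the value $\langle p_{n,k},1\rangle = \binom nk(\alpha+1)_k(\alpha+1)_{n-k}/(2\alpha+2)_n$ computed above into the definition \ineq{dalpha}, and interchange the finite sum over $k$ with the integral defining $\langle p_{n,k},f\rangle$. This produces
\[
\dal_n(f,x) = (2\alpha+2)_n \int_0^1 f(t)\left[\sum_{k=0}^n \frac{p_{n,k}(x)p_{n,k}(t)}{\binom nk (\alpha+1)_k(\alpha+1)_{n-k}}\right] dw(t,\alpha).
\]
Applying \ineq{lup2} to the bracketed sum (and using continuity to cover $t=1-x$, where the product $(x+t-1)^n\phi_n^{(\alpha)}(\cdot)$ is still a genuine polynomial) collapses the kernel to $\frac{1}{(\alpha+1)_n}(x+t-1)^n\phi_n^{(\alpha)}(xt/(x+t-1))$, so that the theorem reduces to the single kernel identity
\[
(x+t-1)^n\phi_n^{(\alpha)}\!\left(\frac{xt}{x+t-1}\right) = \int_0^1 [\Theta(x,t,u)]^n\, dw(u,\alpha-1/2).
\]

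Next I would convert this kernel identity into a statement purely about the Gegenbauer polynomial $P_n^{(\lambda)}$ with $\lambda := \alpha+1/2$ (the hypothesis $\alpha>-1/2$ is exactly what makes $dw(u,\alpha-1/2)$, i.e. the weight $(1-s^2)^{\lambda-1}$, integrable). Writing $P:=\sqrt{xt}$ and $Q:=\sqrt{(1-x)(1-t)}$, I would record the elementary identities $x+t-1=P^2-Q^2$ and $2\,xt/(x+t-1)-1 = (P^2+Q^2)/(P^2-Q^2)=:z$, and, after the symmetrizing substitution $s=2u-1$, the factorization $\Theta = P^2+Q^2+2PQ\,s = (P^2-Q^2)(z+\sqrt{z^2-1}\,s)$ with $\sqrt{z^2-1}=2PQ/|P^2-Q^2|$. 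Using $\phi_n^{(\alpha)}(w)=P_n^{(\lambda)}(2w-1)/P_n^{(\lambda)}(1)$ and factoring out $(P^2-Q^2)^n$, the kernel identity becomes the classical Laplace--Gegenbauer integral representation
\[
\frac{P_n^{(\lambda)}(z)}{P_n^{(\lambda)}(1)} = \frac{1}{2\cdot 4^{\lambda-1} B(\lambda,\lambda)} \int_{-1}^1 (z+\sqrt{z^2-1}\,s)^n (1-s^2)^{\lambda-1}\, ds .
\]

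The hard part will be establishing this last representation together with its precise constant. I would prove it elementarily by setting $G_n(z):=\int_{-1}^1 (z+\sqrt{z^2-1}\,s)^n(1-s^2)^{\lambda-1}\,ds$, expanding the binomial, and killing the odd powers of $s$ via $\int_{-1}^1 s^{2m}(1-s^2)^{\lambda-1}\,ds = B(m+\tfrac12,\lambda)$; then I would verify that, after normalization, $G_n$ obeys the three-term recurrence \ineq{recur} with matching values at $n=0,1$, which forces $G_n(z)=\kappa_n P_n^{(\lambda)}(z)$ for an explicit $\kappa_n$. The constant $1/(2\cdot 4^{\lambda-1}B(\lambda,\lambda))$ is then pinned down by evaluating at the base of the recurrence (where $G_0=B(\tfrac12,\lambda)$) and invoking the Legendre duplication formula $\Gamma(2\lambda)=2^{2\lambda-1}\pi^{-1/2}\Gamma(\lambda)\Gamma(\lambda+\tfrac12)$ to reconcile it with $P_n^{(\lambda)}(1)=(2\lambda)_n/n!$.

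Finally I would dispose of the degenerate and sign-dependent cases. Since $\Theta$ is manifestly a polynomial in $P^2$, $Q^2$ and $PQ$ and the weight $dw(u,\alpha-1/2)$ is symmetric about $u=1/2$, the odd-in-$s$ contributions vanish upon integration, so the choice of branch for $\sqrt{z^2-1}$ (equivalently the sign of $P^2-Q^2$, i.e. whether $z\ge 1$ or $z\le -1$) is immaterial. Both sides of the kernel identity are polynomials in $(x,t)$, so once it is verified on the open region where $x,t\in(0,1)$ and $P^2\neq Q^2$, it extends to all $x,t\in[0,1]$ by continuity, which completes the proof.
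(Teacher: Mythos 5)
Your proposal is correct and follows essentially the same route as the paper's proof: you insert the computed value of $\langle p_{n,k},1\rangle$ into \ineq{dalpha}, interchange sum and integral, apply \ineq{lup2} to collapse the kernel, and thereby reduce the theorem to the single identity \ineq{aux}, which you then recognize, via the substitution $s=2u-1$, as Gegenbauer's classical integral representation for $P_n^{(\lambda)}$ with $\lambda=\alpha+1/2$ (the paper does the same with $\cos t = 2u-1$). The only substantive difference is in how that classical representation is justified: the paper cites it from Seidel--Sz\'asz and Szeg\H{o}, remarking that \ineq{aux} can alternatively be obtained by induction using \ineq{recur}, whereas you prove it from scratch by showing that the Laplace integral $G_n(z)=\int_{-1}^1(z+\sqrt{z^2-1}\,s)^n(1-s^2)^{\lambda-1}\,ds$ satisfies the same three-term recurrence \ineq{recur} with matching values at $n=0,1$, then fixing the constant via the duplication formula. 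This is a sound, self-contained variant of the route the paper only sketches; your constant $1/(2\cdot 4^{\lambda-1}B(\lambda,\lambda))$ indeed equals $\Gamma(\lambda+1/2)/(\sqrt{\pi}\,\Gamma(\lambda))$, and your disposal of the branch ambiguity in $\sqrt{z^2-1}$ (by evenness of the weight in $s$) and of the degenerate line $x+t=1$ (both sides being polynomials in $(x,t)$) is correct. What your approach buys is independence from the classical literature; what it costs is the recurrence verification for $G_n$, which, while elementary (e.g., via integration by parts), is real work that the paper avoids by citation.
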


\begin{proof}  Using the definition \ineq{dalpha} we have, for any $\a>-1$,
\begin{eqnarray} \label{kernel}\nonumber
\lefteqn{ \dal_n (f, x)  =   \int_0^1 f(t) \left[ \sum_{k=0}^n p_{n,k}(x) p_{n,k}(t)/\langle p_{n,k}, 1 \rangle \right]    d w(t, \alpha) } \\ \nonumber
& = &
(2\a+2)_n \int_0^1 f(t) \left[
\sum_{k=0}^n {p_{n,k}(x) p_{n,k}(t) \over {\binom nk} (\alpha+1)_k (\alpha+1)_{n-k}} \right]    d w(t, \alpha) \\
& = &
{ (2\a+2)_n \over (\a+1)_n}
\int_0^1 f(t) \left[
(x+t-1)^n \phi_n^{(\alpha)}\left( {xt \over x+t-1} \right)  \right]    d w(t, \alpha) ,
\end{eqnarray}
and it remains to prove that, for $\a>-1/2$,
\be \label{aux}
(x+t-1)^n \phi_n^{(\alpha)}\left( {xt \over x+t-1} \right) = \int_0^1 \left[\Theta (x,t,u)\right]^n   \, d w(u,\alpha-1/2) .
\ee
This identity immediately follows from
  Gegenbauer's formula (see, \eg \cite{ss}*{(2)} or \cite{szego}*{(4.10.3)}): for $\lambda >0$ and all real $x$,
\[
{P_n^{(\lambda)}(x) \over P_n^{(\lambda)}(1)} = {\Gamma(\lambda+1/2) \over \sqrt{\pi}\, \Gamma(\lambda)} \int_0^\pi \left[ x +\sqrt{x^2-1} \cos t \right]^n \sin^{2\lambda-1} t\, dt ,
\]
recalling that $\phi_n^{(\alpha)}(x) =  P_n^{(\a+1/2)}(2x-1)/P_n^{(\a+1/2)}(1)$ and changing variables $\cos t = 2u-1$.
Alternatively, \ineq{aux} can be proved by induction using the recurrence equation \ineq{recur}. Yet another way to prove \ineq{aux} is to use  several results
from the theory of hypergeometric functions as was originally done by \lupas{} in \cite{lup}.
\end{proof}

Since  $\Theta (x,t,u) \in [0,1]$, for all $x,t,u \in [0,1]$, one can immediately get a result on positive summation of a sequence of operators $\dal_n$ as a corollary of \thm{th33} (see \cite{lup}*{Theorem 5.2(2)}). We state this corollary in a slightly more general form which is useful for applications.

\begin{corollary} \label{keycor3}
 Let $\a>-1/2$ and $n, r, \varrho\in \N_0$ with $0\leq \varrho\leq r \leq n$,  and let a  (generating) polynomial
$\gP_n(x) = \sum_{k=0}^n a_k x^k$ be such that
\[
\gP_n^{(r)}(x) \geq 0,  \quad \mbox{\rm for all }\; x \in [0,1] .
\]
 Then, for any   function $\sigma$ such that $0\leq \sigma(x)\leq 1$, $x\in [a,b]\subset [0,1]$,
 \begin{eqnarray}  \label{in:positive3}\nonumber
 \lefteqn{ L_n^{\langle \alpha \rangle} (\gP_n, \sigma(\cdot), r, \varrho, [a,b] \, ; f,x)} \\
 &:=&
 \sum_{k=r-\varrho}^{n-\varrho} {(\a+1)_{k} (k-r+\varrho+1)_r \over (2\a+2)_{k}}  \,a_{k+\varrho }
 \left[ \sigma(x)\right]^k D_k^{\langle \alpha \rangle} (f,x)
 \end{eqnarray}
is a {\bf positive} linear operator on $C[a,b]$.

In particular, if $r=\varrho=0$ and $\sigma(x)=1$, $x\in [0,1]$, then
 \[
 L_n^{\langle \alpha \rangle} (\gP_n, 1, 0, 0, [0,1]; f,x)  =
 \sum_{k=0}^n {(\a+1)_k \over (2\a+2)_k} \, a_k D_k^{\langle \alpha \rangle} (f,x)
 \]
is a {\bf positive} linear operator.
\end{corollary}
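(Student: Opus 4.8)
The plan is to reduce the positivity of $L_n^{\langle \alpha \rangle}$ to the nonnegativity of a single univariate polynomial on $[0,1]$, exploiting the integral representation of the Bernstein-Durrmeyer-Lupa\c{s} operator from \thm{th33}. First I would rewrite that representation (with $n$ replaced by $k$) in the form
\[
\frac{(\alpha+1)_k}{(2\alpha+2)_k}\, D_k^{\langle \alpha \rangle}(f,x) = \int_0^1 f(t) \int_0^1 \left[\Theta(x,t,u)\right]^k \, dw(u,\alpha-1/2)\, dw(t,\alpha),
\]
so that the prefactor $(\alpha+1)_k/(2\alpha+2)_k$ occurring in \ineq{in:positive3} is precisely what cancels the normalizing constant of $D_k^{\langle\alpha\rangle}$. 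Substituting this into the definition of $L_n^{\langle\alpha\rangle}$ and interchanging the finite summation with the two integrations, I would obtain
\[
L_n^{\langle \alpha \rangle}(\gP_n, \sigma(\cdot), r, \varrho, [a,b]; f, x) = \int_0^1 f(t) \int_0^1 G\bigl(\sigma(x)\,\Theta(x,t,u)\bigr)\, dw(u,\alpha-1/2)\, dw(t,\alpha),
\]
where $G(z) := \sum_{k=r-\varrho}^{n-\varrho} (k-r+\varrho+1)_r\, a_{k+\varrho}\, z^k$; the absorption of $[\sigma(x)]^k\left[\Theta(x,t,u)\right]^k$ into the single variable $z=\sigma(x)\Theta(x,t,u)$ is the key bookkeeping move here.

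The heart of the matter is the identity $G(z) = z^{r-\varrho}\,\gP_n^{(r)}(z)$. To establish it I would shift the summation index by setting $j := k+\varrho$, so that $k$ runs from $r-\varrho$ to $n-\varrho$ exactly when $j$ runs from $r$ to $n$, while $(k-r+\varrho+1)_r = (j-r+1)_r = j!/(j-r)!$ and $a_{k+\varrho}=a_j$. This gives
\[
G(z) = z^{-\varrho}\sum_{j=r}^{n} \frac{j!}{(j-r)!}\, a_j\, z^{j} = z^{r-\varrho}\sum_{j=r}^{n} \frac{j!}{(j-r)!}\, a_j\, z^{j-r} = z^{r-\varrho}\,\gP_n^{(r)}(z),
\]
the last sum being exactly the $r$-th derivative of $\gP_n(z)=\sum_{j=0}^n a_j z^j$. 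Although the intermediate expression displays a factor $z^{-\varrho}$, the function $G$ is a genuine polynomial whose lowest-degree term is $z^{r-\varrho}$ with $r-\varrho\ge0$, so this is an identity of polynomials valid for every $z$.

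Finally I would assemble the conclusion. By hypothesis $\gP_n^{(r)}(z)\ge0$ for $z\in[0,1]$; since $\sigma(x)\in[0,1]$ and $\Theta(x,t,u)\in[0,1]$ for all $x,t,u\in[0,1]$, the argument $z=\sigma(x)\Theta(x,t,u)$ lies in $[0,1]$, and because $r-\varrho\ge0$ we have $z^{r-\varrho}\ge0$; hence $G\bigl(\sigma(x)\Theta(x,t,u)\bigr)=z^{r-\varrho}\gP_n^{(r)}(z)\ge0$. As $dw(\cdot,\alpha)$ and $dw(\cdot,\alpha-1/2)$ are positive measures (this is exactly where $\alpha>-1/2$ enters, guaranteeing integrability of the weight $u^{\alpha-1/2}(1-u)^{\alpha-1/2}$), it follows that $L_n^{\langle\alpha\rangle}(f,x)\ge0$ whenever $f\ge0$ on $[a,b]$, which is the asserted positivity; the special case $r=\varrho=0$, $\sigma\equiv1$ is immediate since then $G(z)=\gP_n(z)$. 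I expect the only genuinely delicate step to be the index-shift identity $G(z)=z^{r-\varrho}\gP_n^{(r)}(z)$, specifically the verification that the Pochhammer factor $(k-r+\varrho+1)_r$ is precisely the combinatorial weight produced by differentiating $\gP_n$ exactly $r$ times; the interchange of sum and integral and the membership $z\in[0,1]$ are routine.
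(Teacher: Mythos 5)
Your proposal is correct and follows essentially the same route as the paper: the paper likewise uses the integral representation of Theorem~\ref{th33} to absorb $[\sigma(x)]^k[\Theta(x,t,u)]^k$ into a single polynomial $\Q_{n-\varrho}(z):=z^{r-\varrho}\gP_n^{(r)}(z)$ (your $G$), interchanges the finite sum with the two integrations, and concludes positivity from $0\le\sigma(x)\Theta(x,t,u)\le1$ together with the positivity of the measures $dw(\cdot,\alpha)$ and $dw(\cdot,\alpha-1/2)$. Your index-shift verification of $G(z)=z^{r-\varrho}\gP_n^{(r)}(z)$ is exactly the identity the paper states when defining $\Q_{n-\varrho}$.
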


\begin{proof}
 Since
\[
\Q_{n-\varrho}(x) := x^{r-\varrho} \gP_n^{(r)}(x)  = \sum_{k=r}^n (k-r+1)_r \, a_k \, x^{k-\varrho}
=
\sum_{k=r-\varrho}^{n-\varrho} (k-r+\varrho+1)_r \, a_{k+\varrho } \, x^{k}
\]
we have
 \begin{eqnarray*}
\lefteqn{ L_n^{\langle \alpha \rangle} (\gP_n, \sigma, r, \varrho\, ; f,x) }\\
 & =&
 \sum_{k=r-\varrho}^{n-\varrho} (k-r+\varrho+1)_r   \,a_{k+\varrho }
 \left[ \sigma(x)\right]^k \int_0^1 f(t) \int_0^1 \left[\Theta (x,t,u)\right]^k   \, d w(u,\alpha-1/2)\, dw(t,\alpha) \\
 & = &
 \int_0^1 f(t)   \int_0^1
 \sum_{k=r-\varrho}^{n-\varrho} (k-r+\varrho+1)_r   \,a_{k+\varrho } \left[\sigma(x) \Theta (x,t,u)\right]^k
 \, d w(u,\alpha-1/2)\, dw(t,\alpha) \\
 & = &
 \int_0^1 f(t)   \int_0^1  \Q_{n-\varrho} \left[\sigma(x) \Theta (x,t,u) \right]  \, d w(u,\alpha-1/2)\, dw(t,\alpha) .
\end{eqnarray*}
In view of the fact that $0\leq \sigma(x) \Theta (x,t,u)\leq 1$, for all $x,t,u \in [a,b]$, and that  $ \Q_{n-\varrho}$ is nonnegative on $[0,1]$, we conclude that the operator $L_n^{\langle \alpha \rangle}$ is positive.
\end{proof}

\begin{lemma}[\cite{lup}*{Lemma 4.2}] \label{lem:deriv}
For $\a>-1$, $n,\nu\in\N$ and $f\in C^{\nu}[0,1]$,
\be \label{deriv}
\frac{d^\nu}{dx^\nu} \dal_n(f, x) = {n! \over (n-\nu)! (n+2\a+2)_\nu} D^{\langle \alpha+\nu \rangle}_{n-\nu}\left( f^{(\nu)}, x \right) .
\ee
\end{lemma}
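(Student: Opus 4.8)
The plan is to establish the first-order case and then iterate. Concretely, I would first prove that for every $\beta>-1$, $m\ge1$ and $g\in C^1[0,1]$,
\be
\frac{d}{dx}D^{\langle\beta\rangle}_m(g, x) = \frac{m}{m+2\beta+2}\, D^{\langle\beta+1\rangle}_{m-1}(g', x),
\ee
and then apply this $\nu$ times, at the $j$-th step with $(\beta,m,g)$ replaced by $(\a+j,\,n-j,\,f^{(j)})$ (which is legitimate as long as $\nu\le n$, since $\a+j>-1$ and $n-j\ge1$ throughout, and $f\in C^\nu$ makes each $f^{(j)}$ of class $C^1$). Because the $j$-th factor equals $\frac{n-j}{(n-j)+2(\a+j)+2}=\frac{n-j}{n+2\a+2+j}$, the accumulated constant telescopes,
\[
\prod_{j=0}^{\nu-1}\frac{n-j}{n+2\a+2+j}=\frac{n!/(n-\nu)!}{(n+2\a+2)_\nu},
\]
the weight index advances from $\a$ to $\a+\nu$, and the degree drops from $n$ to $n-\nu$; this is exactly \ineq{deriv}.

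For the first-order identity I would differentiate $D^{\langle\beta\rangle}_m(g,\cdot)$ termwise using the elementary Bernstein derivative formula $p_{m,k}'=m(p_{m-1,k-1}-p_{m-1,k})$. Writing $c_k:=\langle p_{m,k},g\rangle/\langle p_{m,k},1\rangle$ for the coefficients of $D^{\langle\beta\rangle}_m(g,\cdot)$, a reindexing yields
\[
\frac{d}{dx}D^{\langle\beta\rangle}_m(g,x)=m\sum_{k=0}^{m-1}(c_{k+1}-c_k)\,p_{m-1,k}(x).
\]
Comparing with $D^{\langle\beta+1\rangle}_{m-1}(g',x)=\sum_{k=0}^{m-1}\tilde c_k\,p_{m-1,k}(x)$, where $\tilde c_k$ is the coefficient built from the weight $\beta+1$ and the datum $g'$, and using that $\{p_{m-1,k}\}_{k=0}^{m-1}$ is a basis of $\Poly_{m-1}$, everything reduces to the scalar identity
\be \label{planscalar}
(m+2\beta+2)(c_{k+1}-c_k)=\tilde c_k,\qquad 0\le k\le m-1.
\ee

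To prove \ineq{planscalar} I would insert the closed form of $\langle p_{m,k},1\rangle$ computed above, so that $c_k$ and $\tilde c_k$ become explicit constants times the moment integrals $\int_0^1 t^{\beta+k}(1-t)^{\beta+m-k}g\,dt$ and $\int_0^1 t^{\beta+1+k}(1-t)^{\beta+m-k}g'\,dt$, respectively. The key step is integration by parts on the latter: with $h(t):=t^{\beta+1+k}(1-t)^{\beta+m-k}$, the boundary terms vanish because $\beta>-1$ forces both exponents positive for $0\le k\le m-1$, and $-h'$ is precisely a linear combination of the two densities appearing in $c_k$ and $c_{k+1}$. The relations $(\beta+1+k)(\beta+1)_k=(\beta+1)_{k+1}$ and $(\beta+m-k)(\beta+1)_{m-k-1}=(\beta+1)_{m-k}$ then collapse $\int_0^1 h\,g'\,dt$ into a single multiple of $c_{k+1}-c_k$.

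The hard part will be purely the constant bookkeeping: after assembling $\tilde c_k$ one must check that all $k$-dependence cancels (the ratio $(\beta+1)_{k+1}(\beta+1)_{m-k}/[(\beta+2)_k(\beta+2)_{m-1-k}]$ is the constant $(\beta+1)^2$) and that the surviving factor $\frac{(2\beta+4)_{m-1}}{(2\beta+2)_m}\cdot\frac{B(\beta+1,\beta+1)}{B(\beta+2,\beta+2)}(\beta+1)^2$ simplifies to exactly $m+2\beta+2$. Since $B(\beta+1,\beta+1)/B(\beta+2,\beta+2)=(2\beta+2)(2\beta+3)/(\beta+1)^2$ and $(2\beta+4)_{m-1}/(2\beta+2)_m=(2\beta+m+2)/[(2\beta+2)(2\beta+3)]$, this product is indeed $2\beta+m+2$, which establishes \ineq{planscalar} and, via the telescoping iteration, \ineq{deriv}.
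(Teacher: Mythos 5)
Your proof is correct, and it reaches \ineq{deriv} by a genuinely more elementary route than the paper's. Both arguments reduce to the case $\nu=1$ and then iterate (your telescoping product reproduces the paper's constant $n!/\left[(n-\nu)!\,(n+2\a+2)_\nu\right]$ exactly), but the first-order steps differ. The paper stays inside the ultraspherical framework: it writes $\dal_n$ through the kernel $K_n^{\langle \alpha \rangle}(x,t)=(x+t-1)^n\phi_n^{(\alpha)}\left( xt/(x+t-1)\right)$ as in \ineq{kernel}, integrates by parts in $t$, and reduces the claim to the kernel identity \ineq{byparts}, equivalently the Gegenbauer identity \ineq{newby}, which is then verified using Szeg\H{o}'s differentiation and ``reduction of $\alpha$'' formulas together with the recurrence \ineq{recur}. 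You instead differentiate the Bernstein basis termwise and reduce everything to the scalar relation $(m+2\beta+2)(c_{k+1}-c_k)=\tilde c_k$, which you prove by integration by parts on the moment integrals $\int_0^1 t^{\beta+1+k}(1-t)^{\beta+m-k}g'(t)\,dt$ (the boundary terms do vanish, since $\beta>-1$ and $0\le k\le m-1$ make both exponents positive), plus Pochhammer/Beta bookkeeping; I checked your two cancellations --- the ratio $(\beta+1)_{k+1}(\beta+1)_{m-k}\big/\left[(\beta+2)_k(\beta+2)_{m-1-k}\right]=(\beta+1)^2$ and the final constant collapsing to $m+2\beta+2$ --- and both are right. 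What each approach buys: yours needs nothing beyond the definition \ineq{dalpha} and the value of $\langle p_{n,k},1\rangle$ already computed in the paper, so it is self-contained and avoids the ultraspherical identities altogether; the paper's proof, while heavier, recycles the kernel representation of \thm{th33} that is needed anyway for the positivity results (\cor{keycor3}), and keeps the structure visible at the level of the kernels themselves. One shared implicit restriction: both arguments require $\nu\le n$ (otherwise $D^{\langle \alpha+\nu \rangle}_{n-\nu}$ is undefined, while the left-hand side of \ineq{deriv} is simply zero because $\dal_n(f,\cdot)\in\Poly_n$); you at least flag this explicitly.
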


\begin{proof} It is sufficient to prove \ineq{deriv} for $\nu=1$ since the general case immediately follows by induction.

It follows from \ineq{kernel}   that, for $\a>-1$,
\[
  \dal_n (f, x)  =  { (2\a+2)_n \over (\a+1)_n}  \int_0^1 f(t) K_n^{\langle \alpha  \rangle}(x,t)    d w(t, \alpha)    ,
\]
where
\[
K_n^{\langle \alpha  \rangle}(x,t) := (x+t-1)^n \phi_n^{(\alpha)}\left( {xt \over x+t-1} \right) ,
\]
and \ineq{deriv} with $\nu=1$ follows using integration by parts and the following identity:
\be \label{byparts}
 \frac{\partial}{\partial x}K_n^{\langle \alpha  \rangle}(x,t) =    n(2t-1) K_{n-1}^{\langle \alpha+1  \rangle}(x,t) -  {n t(1-t)\over \a+1}   \frac{\partial}{\partial t}K_{n-1}^{\langle \alpha+1  \rangle}(x,t) .
\ee
Using
\be \label{fder}
\frac{d}{dz} \phi_n^{(\alpha)}(z)  =  {n(2\a+n+1)  \over \a+1} \phi_{n-1}^{(\alpha+1)}(z)
\ee
 (see, \eg \cite{szego}*{(4.7.14)}) identity \ineq{byparts} can be rewritten as
\be \label{newby}
\phi_n^{(\alpha)}(z) = (2z-1) \phi_{n-1}^{(\alpha+1)}(z) - {(n-1)(2\a+n+2) \over (\a+1) (\a+2)} z(1-z) \phi_{n-2}^{(\alpha+2)}(z) .
\ee
Finally, \ineq{newby} can be proved using the  ``reduction of $\alpha$'' formula
\[
 z(1-z) \phi_{n-1}^{(\alpha+1)}(z) ={\a+1 \over 2n} \left( (2z-1) \phi_{n}^{(\alpha)}(z) - \phi_{n+1}^{(\alpha)}(z)\right)
\]
(see, \eg \cite{szego}*{(4.7.27)}) and the recurrence equation \ineq{recur}.
Alternatively, one can use the formula for the $\nu$th derivative of $\phi_n^{(\alpha)}$ that follows from \ineq{fder}
\[
\frac{d^\nu}{dz^\nu} \phi_n^{(\alpha)}(z)  =  {(n-\nu+1)_\nu (2\a+n+1)_\nu  \over (\a+1)_\nu} \phi_{n-\nu}^{(\alpha+\nu)}(z) , \quad 1\leq \nu \leq n ,
\]
and the fact that both sides of \ineq{newby} are polynomials of degree $n$ whose $\nu$th derivatives are the same at $z=1$ for all $0\leq \nu\leq n$.
\end{proof}

\lem{lem:deriv} can be used to recursively calculate $\dal_n(e_i, x)$, $i\in\N_0$, taking into account that
\[
\dal_n(e_i, 0) = \frac{\langle p_{n,0}, e_i \rangle}{\langle p_{n,0}, 1 \rangle}
= {B(\a+i+1, \a+n+1) \over B(\a+1, \a+n +1)} = {(\a+1)_i \over (n+2\a+2)_i }.
\]
For example,
\[
\dal_n(e_0, x)=1, \quad \dal_n(e_1, x) = {nx +\a+1 \over n+2\a +2}
\]
and
\[
\dal_n(e_2, x)=
{n(n-1)x^2 + 2n(\a+2)x + (\a+1)(\a+2) \over (n+2\a+2)(n+2\a+3)} .
\]

\begin{bibsection}
\begin{biblist}

\bib{be}{book}{
   author={Borwein, P.},
   author={Erd{\'e}lyi, T.},
   title={Polynomials and polynomial inequalities},
   series={Graduate Texts in Mathematics},
   volume={161},
   publisher={Springer-Verlag, New York},
   date={1995},
   pages={x+480},
}

\bib{busta}{article}{
   author={Bustamante, J.},
   title={Estimates of positive linear operators in terms of second-order
   moduli},
   journal={J. Math. Anal. Appl.},
   volume={345},
   date={2008},
   number={1},
   pages={203--212},
}

\bib{cg}{article}{
   author={Cao, J. D.},
   author={Gonska, H. H.},
   title={Pointwise estimates for higher order convexity preserving
   polynomial approximation},
   journal={J. Austral. Math. Soc. Ser. B},
   volume={36},
   date={1994},
   number={2},
   pages={213--233},
}

\bib{cggkz}{article}{
   author={Cottin, C.},
   author={Gavrea, I.},
   author={Gonska, H. H.},
   author={Kacs\'o, D. P.},
   author={Zhou, D.-X.},
   title={Global smoothness preservation and the
   variation-diminishing property},
   journal={J. Inequ. \&Appl.},
   volume={4},
   date={1999},
   pages={91--114},
}

\bib{d}{article}{
   author={Ditzian, Z.},
   title={Polynomial approximation and $\omega^r_\phi(f,t)$ twenty
   years later},
   journal={Surv. Approx. Theory},
   volume={3},
   date={2007},
   pages={106--151},
}

\bib{d-jat}{article}{
   author={Ditzian, Z.},
   title={Direct estimate for Bernstein polynomials},
   journal={J. Approx. Theory},
   volume={79},
   date={1994},
   number={1},
   pages={165--166},
}

\bib{dt}{book}{
author={Ditzian, Z.},
author={Totik, V.},
title={Moduli of smoothness},
series={Springer Series in Computational Mathematics},
volume={9},
publisher={Springer-Verlag},
place={New York},
date={1987},
pages={x+227},
isbn={0-387-96536-X},
}

\bib{felten}{article}{
   author={Felten, M.},
   title={Local and global approximation theorems for positive linear
   operators},
   journal={J. Approx. Theory},
   volume={94},
   date={1998},
   number={3},
   pages={396--419},
}

\bib{g}{article}{
   author={Gavrea, I.},
   title={The approximation of the continuous functions by means of
   some linear positive operators},
   journal={Results in Math.},
   volume={30},
   date={1996},
   pages={55--66},
}

\bib{ggpt}{article}{
   author={Gavrea, I.},
   author={Gonska, H.},
   author={P{\u a}lt{\u a}nea, R.},
   author={Tachev, G.},
   title={General estimates for the Ditzian-Totik modulus},
   journal={East J. Approx.},
   volume={9},
   date={2003},
   pages={175--194},
}

\bib{gkr}{article}{
   author={Gonska, H.},
   author={Kacs{\'o}, D.},
   author={Ra{\c{s}}a, I.},
   title={The genuine Bernstein-Durrmeyer operators revisited},
   journal={Results Math.},
   volume={62},
   date={2012},
   number={3-4},
   pages={295--310},
}

\bib{gs}{article}{
   author={Goodman, T. N. T.},
   author={Sharma, A.},
   title={A modified Bernstein-Schoenberg operator},
   conference={
      title={Constructive theory of functions},
      address={Varna},
      date={1987},
   },
   book={
      publisher={Publ. House Bulgar. Acad. Sci., Sofia},
   },
   date={1988},
   pages={166--173},
}

\bib{klps}{article}{
author={Kopotun, K. A.},
author={Leviatan, D.},
author={Prymak, A.},
author={Shevchuk, I. A.},
title={Uniform and pointwise shape preserving approximation by algebraic polynomials},
journal={Surveys in Approximation Theory },
volume={6},
date={2011},
pages={24--74},
 }

\bib{kls}{article}{
   author={Kopotun, K. A.},
   author={Leviatan, D.},
   author={Shevchuk, I. A.},
   title={Convex polynomial approximation in the uniform norm: conclusion},
   journal={Canad. J. Math.},
   volume={57},
   date={2005},
   number={6},
   pages={1224--1248},
}

\bib{lup}{article}{
   author={Lupa{\c{s}}, A.},
   title={The approximation by means of some linear positive operators},
   conference={
      title={Approximation theory},
      address={Witten},
      date={1995},
   },
   book={
      series={Math. Res.},
      volume={86},
      publisher={Akademie-Verlag, Berlin},
   },
   date={1995},
   pages={201--229},
}

\bib{ss}{article}{
   author={Seidel, W.},
   author={Sz{\'a}sz, O.},
   title={On positive harmonic functions and ultraspherical polynomials},
   journal={J. London Math. Soc.},
   volume={26},
   date={1951},
   pages={36--41},
}

\bib{szego}{book}{
   author={Szeg{\H{o}}, G.},
   title={Orthogonal polynomials},
   edition={4},
   note={American Mathematical Society, Colloquium Publications, Vol.
   XXIII},
   publisher={American Mathematical Society, Providence, R.I.},
   date={1975},
   pages={xiii+432},
}

\bib{tach}{article}{
   author={Tachev, G.},
   title={Pointwise approximation by Bernstein polynomials},
   journal={Bull. Aust. Math. Soc.},
   volume={85},
   date={2012},
   number={3},
   pages={353--358},
}

\end{biblist}
\end{bibsection}

\end{document}